\newtheorem{theorem}{Theorem}[section]
\newtheorem{lemma}[theorem]{Lemma}
\newtheorem{xclaim}[theorem]{Claim}
\theoremstyle{definition}
\newtheorem{definition}[theorem]{Definition}
\newtheorem{project}[theorem]{Project}
\newtheorem{example}[theorem]{Example}
\newtheorem{notation}[theorem]{Notation}
\theoremstyle{remark}
\newtheorem{remark}[theorem]{Remark}
\numberwithin{equation}{section}
\begin{document}

\title[One-point extensions of locally compact paracompact spaces]{One-point extensions of locally compact paracompact spaces}

\author{M.R. Koushesh}
\address{Department of Mathematical Sciences, Isfahan University of Technology, Isfahan 84156--83111, Iran}
\address{School of Mathematics, Institute for Research in Fundamental Sciences (IPM), P.O. Box: 19395--5746, Tehran, Iran}
\email{koushesh@cc.iut.ac.ir}
\thanks{This research was in part supported by a grant from IPM (No. 86540012).}

\subjclass[2010]{54D20, 54D35, 54D40, 54D45, 54E50}

\keywords{Stone-\v{C}ech compactification; One-point extension; One-point compactification; Locally compact; Paracompact; \v{C}ech complete; First-countable.}

\begin{abstract}
A space $Y$ is called an {\em extension} of a space $X$ if $Y$
contains $X$ as a dense subspace.
Two extensions of $X$ are said to be {\em equivalent} if there is a homeomorphism between them which fixes $X$ point-wise.
For two (equivalence classes of)  extensions $Y$ and $Y'$ of $X$ let $Y\leq Y'$ if there is a continuous function of $Y'$ into $Y$
which fixes  $X$ point-wise. An extension $Y$ of $X$ is called a {\em one-point extension}  if $Y\backslash X$ is a singleton.
An extension $Y$ of  $X$ is called {\em first-countable} if $Y$ is first-countable at points of $Y\backslash X$.
Let ${\mathcal P}$ be a topological
property. An extension $Y$ of $X$ is called a {\em
${\mathcal P}$-extension} if it has ${\mathcal P}$.

In this article, for a given locally compact paracompact space $X$, we consider the two classes of one-point \v{C}ech-complete ${\mathcal P}$-extensions of $X$  and one-point first-countable locally-${\mathcal P}$ extensions of $X$, and we study their order-structures, by relating them to the  topology of a certain subspace of the outgrowth $\beta X\backslash X$. Here ${\mathcal P}$
is subject to some requirements and include $\sigma$-compactness and the Lindel\"{o}f property as special cases.
\end{abstract}

\maketitle

\section{Introduction}

A space $Y$ is called an {\em extension} of a space $X$ if $Y$
contains $X$ as a dense subspace. If $Y$ is an extension of   $X$ then the subspace $Y\backslash X$ of $Y$ is called the
{\em  remainder} of $Y$.  Extensions with a one-point remainder are called {\em one-point extensions}. Two extensions of  $X$ are said to be {\em equivalent} if there exists a homeomorphism between them
which fixes $X$ point-wise. This defines an equivalence relation on the class of all extensions of  $X$. The equivalence classes
will be identified with individuals when this causes no confusion. For two  extensions $Y$ and $Y'$ of $X$ we
let $Y\leq Y'$ if there exists a continuous function of $Y'$ into $Y$ which fixes  $X$ point-wise. The relation $\leq$ defines  a partial
order on the set of extensions of $X$ (see Section 4.1 of \cite{PW} for more details). An extension $Y$ of  $X$ is called {\em first-countable} if $Y$ is first-countable at points of $Y\backslash X$, that is, $Y$ has a countable local base at every point of $Y\backslash X$. Let ${\mathcal P}$ be a topological
property. An extension $Y$ of $X$ is called a {\em
${\mathcal P}$-extension} if it has ${\mathcal P}$. If ${\mathcal P}$ is compactness then
${\mathcal P}$-extensions are called  {\em compactifications}.

This work was mainly  motivated by our previous work \cite{Ko2} (see \cite{Be}, \cite{HJW}, \cite{Ko1}, \cite{Ko4},  \cite{MRW1} and \cite{MRW2} for related results) in which we have studied the partially ordered set of one-point ${\mathcal P}$-extensions of a given locally compact space $X$ by relating it to the topologies of certain subspaces of its outgrowth  $\beta X\backslash X$. In this article we continue our studies by considering the classes of one-point \v{C}ech-complete ${\mathcal P}$-extensions and one-point first-countable locally-${\mathcal P}$ extensions of a given locally compact paracompact space $X$. The topological property ${\mathcal P}$
is subject to some requirements and include $\sigma$-compactness, the Lindel\"{o}f property and the linearly Lindel\"{o}f property as special cases.

We  review some of the terminology, notation and well-known results that will be used in the sequel. Our definitions mainly come from the standard  text
\cite{E} (thus, in particular, compact spaces are Hausdorff,  etc.). Other useful sources are \cite{GJ} and \cite{PW}.

The letters $\mathbf{I}$ and $\mathbf{N}$ denote
the closed unit interval and the set of all positive integers, respectively.  For a subset $A$ of a space $X$ we let $\mbox{cl}_X
A$ and $\mbox{int}_X A$  denote the closure and the
interior  of  $A$ in $X$, respectively.  A subset of a space is called {\em clopen} if it is simultaneously closed and open.
A {\em zero-set} of a space $X$ is a set of the form
$Z(f)=f^{-1}(0)$ for some continuous $f:X\rightarrow \mathbf{I}$.  Any set of the form
$X\backslash Z$, where $Z$ is a zero-set of $X$, is called a {\em
cozero-set} of $X$. We denote the set of all zero-sets of $X$ by
${\mathscr Z}(X)$ and the set of all cozero-sets of $X$ by $Coz(X)$.

For a Tychonoff space $X$ the
{\em Stone-\v{C}ech compactification} of $X$ is the largest (with respect to the partial order $\leq$)
compactification of $X$ and is  denoted by $\beta X$.  The Stone-\v{C}ech compactification of  $X$ can be characterized
among all compactifications of $X$ by either of the following properties:
\begin{itemize}
  \item Every continuous function of $X$ to a compact space is continuously extendible over $\beta X$.
  \item Every continuous function of $X$ to $\mathbf{I}$ is continuously extendible over $\beta X$.
  \item For every $Z,S\in {\mathscr Z}(X)$ we have
  \[\mbox{cl}_{\beta X}(Z\cap S)=\mbox{cl}_{\beta X}Z\cap\mbox{cl}_{\beta X}S.\]
\end{itemize}

A Tychonoff space is called {\em  zero-dimensional} if it has an open base consisting of its clopen subsets. A Tychonoff space
is called {\em strongly zero-dimensional} if its Stone-\v{C}ech compactification is zero-dimensional. A Tychonoff space $X$ is called {\em \v{C}ech-complete} if its outgrowth $\beta X\backslash X$ is an $F_\sigma$ in $\beta X$. Locally compact spaces are  \v{C}ech-complete, and in the realm of metrizable spaces $X$, \v{C}ech-completeness is equivalent to the existence of a compatible complete metric on $X$.

Let ${\mathcal P}$ be a topological property. A  topological space $X$ is called {\em locally-${\mathcal P}$} if for every $x\in X$
there exists an open neighborhood $U_x$ of $x$ in $X$ such that $\mbox{cl}_XU_x$ has ${\mathcal P}$.

A topological property $\mathcal{P}$  is said to be {\em hereditary with respect to closed subsets} if each closed subset of a space with $\mathcal{P}$ also has $\mathcal{P}$. A  topological property ${\mathcal P}$ is said to be  {\em preserved under finite (closed) sums of subspaces}
if a Hausdorff space has ${\mathcal P}$, provided that it is the union of a finite collection of its (closed) ${\mathcal P}$-subspaces.

Let  $(P,\leq)$ and $(Q,\leq)$ be two partially ordered sets. A mapping  $f:(P,\leq)\rightarrow(Q,\leq)$ is said to be an {\em order-homomorphism} ({\em anti-order-homomorphism}, respectively) if $f(a)\leq f(b)$ ($f(b)\leq f(a)$, respectively) whenever $a\leq b$. An order-homomorphism (anti-order-homomorphism, respectively) $f:(P,\leq)\rightarrow(Q,\leq)$ is said to be an {\em order-isomorphism} ({\em anti-order-isomorphism}, respectively) if $f^{-1}:(Q,\leq)\rightarrow(P,\leq)$ (exists and) is an order-homomorphism (anti-order-homomorphism, respectively). Two partially ordered sets $(P,\leq)$ and $(Q,\leq)$ are called  {\em order-isomorphic} ({\em anti-order-isomorphic}, respectively) if there exists an  order-isomorphism (anti-order-isomorphism, respectively) between them.

\section{Motivations, notations and definitions}

In this article  we will be dealing with various sets of one-point extensions of a given topological space $X$.
For the reader's convenience we list these sets all at the beginning.

\begin{notation}\label{JHB}
Let $X$ be a topological space. Denote
\begin{itemize}
  \item ${\mathscr E}(X)=\{Y:Y \mbox{ is a one-point Tychonoff extension of } X\}$
  \item ${\mathscr E}^{\,*}(X)=\{Y\in{\mathscr E}(X):Y \mbox{ is first-countable at } Y\backslash X\}$
  \item ${\mathscr E}^{\,C}(X)=\{Y\in{\mathscr E}(X):Y \mbox{ is \v{C}ech-complete}\}$
  \item ${\mathscr E}^{\,K}(X)=\{Y\in{\mathscr E}(X):Y \mbox{ is locally compact}\}$
\end{itemize}
and when ${\mathcal P}$ is a topological property
\begin{itemize}
  \item ${\mathscr E}_{\,{\mathcal P}}(X)=\{Y\in{\mathscr E}(X) :Y \mbox{ has  } {\mathcal P}\}$
  \item ${\mathscr E}_{\,local-{\mathcal P}}(X)=\{Y\in{\mathscr E}(X) :Y \mbox{ is locally-${\mathcal P}$}\}$.
\end{itemize}
Also, we may use notations which are obtained by combinations of the above notations, e.g.
\[{\mathscr E}^{\,*}_{\,local-{\mathcal P}}(X)={\mathscr E}^{\,*}(X)\cap{\mathscr E}_{\,local-{\mathcal P}}(X).\]
\end{notation}

\begin{definition}[\cite{Ko3}]\label{HGA}
For a Tychonoff space $X$ and a topological property  ${\mathcal P}$, let
\[\lambda_{{\mathcal P}} X=\bigcup\big\{\mbox{int}_{\beta X} \mbox{cl}_{\beta X}C:C\in Coz(X)\mbox{ and } \mbox{cl}_X C \mbox{ has }{\mathcal P}\big\}.\]
\end{definition}

\begin{definition}[\cite{Mr}]\label{HGFB}
We say that a topological
property  $\mathcal{P}$ {\em satisfies Mr\'{o}wka's condition} (W) if it satisfies the following:   If $X$ is a Tychonoff
space in which there exists a point  $p$ with an open  base  ${\mathscr B}$ for $X$ at $p$ such that $X\backslash B$ has $\mathcal{P}$ for each
$B\in {\mathscr B}$, then $X$ has $\mathcal{P}$.
\end{definition}

Mr\'{o}wka's condition (W) is satisfied by a large number of topological properties; among them are (regularity $+$) the  Lindel\"{o}f property, paracompactness, metacompactness, subparacompactness, the para-Lindel\"{o}f property,  the  $\sigma$-para-Lindel\"{o}f  property,  weak $\theta$-refinability,  $\theta$-refinability (or  submetacompactness), weak $\delta\theta$-refinability,  $\delta\theta$-refinability (or the submeta-Lindel\"{o}f property), countable paracompactness, $[\theta,\kappa]$-compact-ness,  $\kappa$-boundedness,  screenability,  $\sigma$-metacompactness,  Dieudonn\'{e} completeness, $N$-compactness \cite{M1},  realcompactness, almost realcompactness \cite{F} and zero-dimensionality (see \cite{Ko3}, \cite{MRW1} and \cite{MRW2} for proofs and \cite{Bu}, \cite{Steph} and \cite{Va} for definitions).

In \cite{Ko4} we have obtained the following result.

\begin{theorem}[\cite{Ko4}]\label{FHFH}
Let $X$ and $Y$ be locally compact locally-${\mathcal P}$ non-${\mathcal P}$  spaces  where  ${\mathcal P}$ is either pseudocompactness or a closed hereditary topological property which is preserved under  finite closed sums of subspaces and satisfies   Mr\'{o}wka's condition {\em (W)}.
The following are equivalent:
\begin{itemize}
\item[\rm(1)] $\lambda_{{\mathcal P}} X\backslash X$ and $\lambda_{{\mathcal P}} Y\backslash Y$ are homeomorphic.
\item[\rm(2)] $({\mathscr E}_{\,{\mathcal P}}(X),\leq)$ and $({\mathscr E}_{\,{\mathcal P}}(Y),\leq)$ are order-isomorphic.
\item[\rm(3)] $({\mathscr E}^{\,C}_{\,{\mathcal P}}(X),\leq)$ and $({\mathscr E}^{\,C}_{\,{\mathcal P}}(Y),\leq)$ are order-isomorphic.
\item[\rm(4)] $({\mathscr E}^{\,K}_{\,{\mathcal P}}(X),\leq)$ and $({\mathscr E}^{\,K}_{\,{\mathcal P}}(Y),\leq)$ are order-isomorphic, provided that  $X$ and $Y$ are moreover strongly zero-dimensional.
\end{itemize}
\end{theorem}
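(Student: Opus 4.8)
The plan is to reduce all four statements to the internal topology of the subspace $T_X=\lambda_{\mathcal P}X\backslash X$ of the outgrowth, and likewise of $T_Y=\lambda_{\mathcal P}Y\backslash Y$. Since $X$ is locally compact it is open in $\beta X$ and $\beta X\backslash X$ is compact; the local-$\mathcal P$ assumption (with closed-hereditariness of $\mathcal P$) gives $X\subseteq\lambda_{\mathcal P}X$, for each $x$ lies in the interior of the closure of a cozero-set contained in a neighborhood whose closure has $\mathcal P$. Hence $\lambda_{\mathcal P}X$ is open in $\beta X$ and $T_X=\lambda_{\mathcal P}X\cap(\beta X\backslash X)$ is an open, therefore locally compact, subspace of $\beta X\backslash X$, non-empty because $X$ is non-$\mathcal P$. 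The backbone is the standard correspondence for locally compact $X$ assigning to each non-empty compact $K\subseteq\beta X\backslash X$ the one-point extension $Y_K=X\cup\{p_K\}$ whose neighborhoods of $p_K$ are the sets $\{p_K\}\cup(X\backslash C)$ with $C$ closed in $\beta X$ and $C\cap K=\emptyset$. First I would check that $K\mapsto Y_K$ is a bijection onto $\mathscr E(X)$ and an anti-order-isomorphism onto $(\mathscr E(X),\leq)$, the one-point compactification $\omega X=Y_{\beta X\backslash X}$ being the least element.

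The second step is to pin down the three sub-classes inside this correspondence. Using Definition \ref{HGA} together with the compactness of $K$ and the hypotheses on $\mathcal P$ (closed-hereditariness, finite closed sums, and Mr\'owka's condition (W); the pseudocompact case being treated separately), I would show that $Y_K$ is a $\mathcal P$-extension exactly when $K\subseteq\lambda_{\mathcal P}X$, i.e.\ when $K$ is a compact subset of $T_X$: one covers $K$ by finitely many interiors $\mathrm{int}_{\beta X}\mathrm{cl}_{\beta X}C_i$, producing a neighborhood of $p_K$ whose closure is assembled from finitely many closed $\mathcal P$-pieces, and (W) upgrades this to $\mathcal P$ for $Y_K$. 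Next, $Y_K$ is \v{C}ech-complete precisely when $K$ is a $G_\delta$-subset (equivalently, by compactness and normality of $\beta X\backslash X$, a zero-set), which one sees by passing to the collapsing map $\beta X\to\beta Y_K$ carrying $K$ to $p_K$ and observing that \v{C}ech-completeness of $Y_K$ amounts to $p_K$ being a $G_\delta$-point of $\beta Y_K\backslash X$; and $Y_K$ is locally compact precisely when $K$ is open, hence clopen, in $\beta X\backslash X$, strong zero-dimensionality of $X$ ensuring through zero-dimensionality of $\beta X$ that such clopen sets abound. Intersecting with the $\mathcal P$-condition yields anti-order-isomorphisms of $(\mathscr E_{\mathcal P}(X),\leq)$, $(\mathscr E^{\,C}_{\mathcal P}(X),\leq)$ and $(\mathscr E^{\,K}_{\mathcal P}(X),\leq)$ onto the inclusion-posets of, respectively, the non-empty compact, the non-empty compact zero-set, and the non-empty compact open subsets of $T_X$.

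With these representations the implications (1)$\Rightarrow$(2), (1)$\Rightarrow$(3), (1)$\Rightarrow$(4) are immediate: a homeomorphism $h:T_X\to T_Y$ carries compact sets to compact sets, zero-sets to zero-sets and clopen sets to clopen sets, so $K\mapsto h(K)$ is an order-isomorphism of the relevant inclusion-posets, and conjugating by the two anti-order-isomorphisms yields an order-isomorphism of extension posets (anti $\circ$ order $\circ$ anti is order-preserving). The content of the theorem is therefore the reverse direction, reconstructing the homeomorphism from an order-isomorphism. Here I would first recover the underlying sets: in each inclusion-poset the minimal non-empty elements are the singletons of $T_X$, so an order-isomorphism induces a bijection $\phi:T_X\to T_Y$ respecting finite joins, i.e.\ finite unions. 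For the zero-set and clopen posets one must still verify that singletons remain atoms, i.e.\ that points of $T_X$ are $G_\delta$, respectively admit a clopen neighborhood base; this is where the particular structure of $T_X$, and in case (4) zero-dimensionality, enters.

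The main obstacle is to promote $\phi$ to a homeomorphism, i.e.\ to recover the topology of $T_X$ from the bare order. Case (4) is cleanest: the compact open subsets form a generalized Boolean algebra that is a base for the locally compact zero-dimensional space $T_X$, and Stone-type duality reconstructs $T_X$ from this lattice, so the order-isomorphism is realized by $\phi$. For cases (2) and (3) I would characterize purely order-theoretically the way-below relation $K\ll L$, meaning $K\subseteq\mathrm{int}_{T_X}L$, which is available because $T_X$ is locally compact; the open subsets of $T_X$ then correspond order-isomorphically to the interpolating ideals of the compact-subset poset, whence an order-isomorphism transports open sets to open sets and $\phi$ is a homeomorphism. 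The step I expect to absorb most of the work is case (3): one must show that the compact zero-set subsets of $T_X$ are cofinal and base-like enough that $\ll$, and hence the topology, is recoverable from them alone, which requires extracting a supply of compact zero-set neighborhoods in $T_X$ from the definition of $\lambda_{\mathcal P}X$ and the standing hypotheses. Once the topology is recovered in each case, $\phi$ is the required homeomorphism and (2)$\Rightarrow$(1), (3)$\Rightarrow$(1), (4)$\Rightarrow$(1) follow, closing the cycle.
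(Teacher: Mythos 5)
First, a point of orientation: the paper does not prove Theorem \ref{FHFH} at all --- it is imported verbatim from \cite{Ko4} --- so there is no internal proof to compare against. What the paper does contain, however, are the analogous characterization lemmas for its own setting (Lemmas \ref{HJGHG} and \ref{HDGF}), and these expose a fatal error at the center of your argument.

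The gap is in your second step, where you claim that $Y_K$ is a ${\mathcal P}$-extension exactly when $K\subseteq\lambda_{{\mathcal P}}X$. The correct criterion is the reverse containment: $Y_K$ has ${\mathcal P}$ precisely when $\beta X\backslash\lambda_{{\mathcal P}}X\subseteq K$, i.e.\ when everything \emph{outside} $K$ lies in $\lambda_{{\mathcal P}}X$ (compare Lemma \ref{HJGHG}, where the condition is $\beta X\backslash\lambda_{{\mathcal P}}X\subseteq\Theta_X(Y)$). The condition $K\subseteq\lambda_{{\mathcal P}}X$ characterizes something else entirely, namely that $Y_K$ is \emph{locally}-${\mathcal P}$ at the added point (compare Lemma \ref{HDGF}). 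A concrete counterexample to your criterion: let ${\mathcal P}$ be the Lindel\"{o}f property and $X=\bigoplus_{i\in I}\mathbf{R}$ with $I$ uncountable; taking $K=\mbox{cl}_{\beta X}X_{i_0}\backslash X$ for a single index $i_0$ gives $K\subseteq\lambda_{{\mathcal P}}X$, yet $Y_K$ still contains the uncountably many untouched closed copies of $\mathbf{R}$ and is nowhere near Lindel\"{o}f. The source of the confusion is your reading of Mr\'{o}wka's condition (W): it asks that the \emph{complements} $X\backslash B$ of basic neighborhoods of $p$ have ${\mathcal P}$, whereas your covering argument produces a neighborhood of $p_K$ whose \emph{closure} is a finite union of ${\mathcal P}$-pieces --- that yields local-${\mathcal P}$ at $p_K$, not ${\mathcal P}$ for $Y_K$. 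This is not a cosmetic slip: with the correct criterion, $\Theta_X({\mathscr E}_{\,{\mathcal P}}(X))$ is the family of compact sets containing the fixed closed set $\beta X\backslash\lambda_{{\mathcal P}}X$ (equivalently, via complementation, the lattice of open subsets of $\lambda_{{\mathcal P}}X\backslash X$), which has a completely different order structure from the family of compact subsets of $\lambda_{{\mathcal P}}X\backslash X$ that your proof goes on to analyze --- for instance it has a minimum and is closed under finite intersections, while yours contains disjoint pairs. Theorem \ref{HFS} of the paper makes precisely this point: the ``${\mathcal P}$'' and ``local-${\mathcal P}$'' one-point extension posets are order-isomorphic only in the degenerate ($\sigma$-compact) case. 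Everything downstream of this step --- the identification of atoms with singletons of $T_X$, the way-below/Stone-duality reconstruction --- is built on the wrong poset and would have to be redone from scratch.
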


There are topological properties, however, which do not satisfy the assumption of Theorem \ref{FHFH} ($\sigma$-compactness, for example, does not satisfy   Mr\'{o}wka's condition (W); see  \cite{Ko3}). The purpose of this article is to prove the following version of Theorem \ref{FHFH}. Specific topological properties ${\mathcal P}$ which satisfy the requirements of Theorem  \ref{JJHJJ} below are $\sigma$-compactness, the Lindel\"{o}f property and the linearly Lindel\"{o}f property.  Note that in Theorem 3.19 of \cite{Ko2} we have shown that conditions (1) and (3) of Theorem \ref{JJHJJ} are equivalent, if  ${\mathcal P}$ is $\sigma$-compactness, and in Theorem 3.21 of \cite{Ko2} we have shown that conditions (1) and (2) of Theorem \ref{JJHJJ} are equivalent, if  ${\mathcal P}$ is the Lindel\"{o}f property. Thus, in some sense, Theorem \ref{JJHJJ} generalizes Theorems  3.19 and 3.21 of \cite{Ko2}, and at the same time, brings them  under a same umbrella.

\begin{theorem}\label{JJHJJ}
Let $X$ and $Y$  be locally compact paracompact spaces  and let  ${\mathcal P}$ be a closed hereditary topological property of compact spaces  which is
preserved under finite sums of subspaces and coincides with $\sigma$-compactness in the realm of locally compact paracompact spaces. The following are
equivalent:
\begin{itemize}
\item[\rm(1)] $\lambda_{{\mathcal P}} X\backslash X$ and $\lambda_{{\mathcal P}} Y\backslash Y$ are homeomorphic.
\item[\rm(2)] $({\mathscr E}^{\,C}_{\,{\mathcal P}}(X), \leq)$ and $({\mathscr E}^{\,C}_{\,{\mathcal P}}(Y), \leq)$ are order-isomorphic.
\item[\rm(3)] $({\mathscr E}^{\,*}_{\,local-{\mathcal P}}(X), \leq)$ and $({\mathscr E}^{\,*}_{\,local-{\mathcal P}}(Y), \leq)$ are order-isomorphic.
\end{itemize}
\end{theorem}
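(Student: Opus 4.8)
The plan is to reduce all three conditions to the topology of the single subspace $L=\lambda_{\mathcal P}X\backslash X$ of the outgrowth, by representing each of the two extension posets as an intrinsically-defined inclusion-poset of compact subsets of $L$. The first step is structural: since $X$ is locally compact and paracompact it is a topological sum $X=\bigoplus_{i\in I}X_i$ of clopen $\sigma$-compact (locally compact Lindel\"{o}f) subspaces, and for countable $J\subseteq I$ the sum $X_J=\bigoplus_{i\in J}X_i$ is a clopen cozero-set with $\sigma$-compact closure whose closure $\mbox{cl}_{\beta X}X_J$ is clopen in $\beta X$. Because ${\mathcal P}$ coincides with $\sigma$-compactness here and a $\sigma$-compact set meets only countably many summands, I would first prove the identity $\lambda_{\mathcal P}X=\bigcup\{\mbox{cl}_{\beta X}X_J:J\subseteq I\mbox{ countable}\}$, a union of clopen pieces. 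Running a compactness argument against this clopen cover then shows two things at once: that \emph{every} compact subset of $L$ lies inside $\mbox{cl}_{\beta X}X_J\backslash X_J$ for a single countable $J$, and that $L$ is itself locally compact (it is covered by the compact, relatively clopen sets $\mbox{cl}_{\beta X}X_J\backslash X_J$). This localizes all later computations to the well-behaved $\sigma$-compact locally compact pieces $X_J$, whose outgrowths are compact $G_\delta$'s.

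Second, I would invoke the anti-order-isomorphism between $({\mathscr E}(X),\leq)$ and the nonempty compact subsets of $\beta X\backslash X$ ordered by inclusion, under which $K\mapsto X\cup\{p\}$ with a neighborhood base at $p$ given by the traces on $X$ of the neighborhoods of $K$ in $\beta X$ (so $Y_{K}\leq Y_{K'}$ iff $K'\subseteq K$). The heart of the proof is to cut out the two classes inside this correspondence and to check that the cutting conditions depend only on the homeomorphism type of $L$. Writing $\Delta=(\beta X\backslash X)\backslash\lambda_{\mathcal P}X$, a compact set, and $K'=K\cap L$, I expect:
\begin{itemize}
\item[(a)] $Y_K$ has ${\mathcal P}$ iff $\Delta\subseteq K$, which reduces a ${\mathcal P}$-extension to its variable part $K'$, a compact subset of $L$; and $Y_K\in{\mathscr E}^{\,C}_{\,{\mathcal P}}(X)$ precisely when $K'$ also satisfies the $G_\delta$-type condition on the localizing $X_J$ forced by \v{C}ech-completeness;
\item[(b)] $Y_K$ is locally-${\mathcal P}$ iff $K\subseteq\lambda_{\mathcal P}X$ (that is, $K=K'\subseteq L$), and first-countable at $p$ iff $K$ is a $G_\delta$ of $\beta X$, so ${\mathscr E}^{\,*}_{\,local-{\mathcal P}}(X)$ corresponds to the nonempty compact $G_\delta$-subsets of $L$.
\end{itemize}
Each map $K'\mapsto Y_{\Delta\cup K'}$, respectively $K\mapsto Y_K$, is then an anti-order-isomorphism onto an inclusion-poset of compact subsets of $L$, and the defining predicates (compactness, $G_\delta$-ness relative to $X_J$, and the \v{C}ech condition) are topological invariants of $L$.

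Granting these representations, the two non-trivial directions are symmetric for (2) and (3). For (1)$\Rightarrow$(2),(3), a homeomorphism $h:L_X\to L_Y$ carries compact sets to compact sets, preserves the intrinsic predicates and inclusion, hence induces an order-isomorphism of the two families; composing with the representations yields order-isomorphisms of the posets. For (2)$\Rightarrow$(1) and (3)$\Rightarrow$(1) I would recover the topology of $L$ from the order on the family of compact subsets of the locally compact space $L$: points are encoded by the minimal elements (suitably interpreted when singletons are not themselves members of the family), and the closure relation and open sets are read off from their meet/join behaviour, exactly as in the reconstruction underlying Theorem \ref{FHFH}. An order-isomorphism of posets then transports this intrinsic data and produces the required homeomorphism $L_X\cong L_Y$.

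The main obstacle is step two, and within it clause (a). Since ${\mathcal P}=\sigma$-compactness fails Mr\'{o}wka's condition (W), the uniform machinery behind Theorem \ref{FHFH} is unavailable, so ``$Y_K$ has ${\mathcal P}\Leftrightarrow\Delta\subseteq K$'' and the accompanying \v{C}ech-completeness clause must be established directly. The delicate point is reconciling the \emph{global} constraint $\Delta\subseteq K$ coming from $\sigma$-compactness of the whole extension (note $\omega X$, with $K=\beta X\backslash X\supseteq\Delta$, is always an admissible ${\mathcal P}$-extension even when $K$ is far from $G_\delta$) with the purely \emph{local} constraint $K\subseteq\lambda_{\mathcal P}X$ governing local-${\mathcal P}$; it is precisely the decomposition $X=\bigoplus X_i$ and the resulting clopen description of $\lambda_{\mathcal P}X$ that make both constraints computable. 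The final thing I would need to nail down is that the \v{C}ech-completeness and first-countability conditions each translate into a genuinely intrinsic notion on $L$, so that each of the two resulting posets is a complete invariant of the topology of $L$; this is exactly what closes the loop (2)$\Leftrightarrow$(1)$\Leftrightarrow$(3).
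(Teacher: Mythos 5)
Your forward direction (1)$\Rightarrow$(2),(3) follows the paper's route and is essentially sound: $\lambda_{{\mathcal P}}X=\sigma X=\bigcup\{\mbox{cl}_{\beta X}X_J:J\mbox{ countable}\}$, the anti-order-isomorphism $\Theta_X$, and the two characterizations (the paper's Lemmas \ref{HJGHG} and \ref{HDGF}), with the localization of compact subsets of $L=\sigma X\backslash X$ into a single clopen piece $X_J^*$. Two corrections there, though. First, in clause (a) the implication ``$\Delta\subseteq K\Rightarrow Y_K$ has ${\mathcal P}$'' is proved (and should only be claimed) together with the hypothesis $K\in{\mathscr Z}(X^*)$: the argument writes $Y=\tau_Y(Z)\cup X_J$ using a zero-set $Z\in{\mathscr Z}(\beta X)$ extending $K$, whose cozero complement is $\sigma$-compact; without that, $\sigma$-compactness of $Y_K$ is not clear. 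Second, for case (2) the ``variable part'' $K\cap L$ is closed but generally \emph{not} compact in $L$ (take $K=X^*$, so $K\cap L=L$), so ${\mathscr E}^{\,C}_{\,{\mathcal P}}(X)$ is \emph{not} represented by an inclusion-poset of compact subsets of $L$; it is the poset of zero-sets of $X^*$ containing the fixed compact set $\Delta$.

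The genuine gap is in (2)$\Rightarrow$(1) and (3)$\Rightarrow$(1), which you dispatch by ``points are encoded by the minimal elements'' and reading off closures from meets and joins. Neither poset admits such an encoding. In $\Theta_X({\mathscr E}^{\,*}_{\,local-{\mathcal P}}(X))$ the members are the nonempty compact zero-sets of $\beta X$ contained in $L$; no singleton $\{x\}$ with $x\in L$ is such a zero-set (a nonempty zero-set of $\beta X$ contained in $X_J^*=\beta X_J\backslash\upsilon X_J$ has at least $2^{\mathfrak c}$ points), and consequently the family has no minimal elements at all, since any member can be properly cut down by intersecting with a further zero-set. In $\Theta_X({\mathscr E}^{\,C}_{\,{\mathcal P}}(X))$ every member contains $\Delta=X^*\backslash L$, while $\Delta$ itself is not a member (every nonempty zero-set of $X^*$ meets $\sigma X$, Lemma \ref{JGRD}); points of $L$ appear only as intersections of filters of members, which is not an order-theoretic datum you can just ``read off.'' You also cannot borrow the reconstruction behind Theorem \ref{FHFH}, whose Mr\'{o}wka-condition hypothesis you yourself note fails here. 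What the paper actually does is pass to the one-point compactification $\omega\sigma X\backslash X$ and show that the given order-isomorphism $\phi$ induces an order-isomorphism of the \emph{full} zero-set lattices ${\mathscr Z}(\omega\sigma X\backslash X)\rightarrow{\mathscr Z}(\omega\sigma Y\backslash Y)$ --- representing a zero-set missing $\Omega$ as $X^*\backslash\bigcup_{n}Z_n$ with each $Z_n$ in the poset and proving, via two nontrivial claims resting on Lemmas \ref{KHSA}--\ref{KHJGD}, that the induced map is well defined and an order-isomorphism --- and only then invokes the fact that a \emph{compact} space is determined by its zero-set lattice. It also needs a separate order-theoretic argument (existence of common upper bounds, resp.\ of a smallest element) to exclude the mixed case where exactly one of $X,Y$ is $\sigma$-compact. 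This machinery is the real content of the converse implications and is absent from your sketch.
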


We now introduce some notation which will be widely used in this article.

\begin{notation}\label{GTD}
Let $X$ be a Tychonoff space $X$. For a subset $A$ of $X$ denote
\[A^*=\mbox{cl}_{\beta X}A\backslash X.\]
In particular, $X^*=\beta X\backslash X$.
\end{notation}

\begin{remark}\label{HUFR}
Note that the notation given in Notation \ref{GTD} can be ambiguous, as $A^*$ can mean either $\beta A\backslash A$ or $\mbox{cl}_{\beta X}A\backslash X$. However, since for $C^*$-embedded subsets these two notions coincide, this will not cause any confusion.
\end{remark}

\begin{definition}[\cite{HJW}]\label{KJH}
For a Tychonoff space $X$, let
\[\sigma X=\bigcup\{\mbox{cl}_{\beta X}H:H\subseteq X\mbox{ is $\sigma$-compact}\}.\]
\end{definition}

\begin{notation}\label{FTFE}
Let $X$ be a locally compact paracompact non-compact space. Then $X$ can be represented as
\[X=\bigoplus_{i\in I} X_i\]
for some index set $I$, with  each $X_i$ for $i\in I$, being $\sigma$-compact and non-compact (see Theorem 5.1.27 and Exercise 3.8.C of \cite{E}). For any $J\subseteq I$ denote
\[X_J=\bigcup_{i\in J}X_i.\]
Thus, using the notation of Notation \ref{GTD}, we have
\[X_J^*=\mbox{cl}_{\beta X}\Big(\bigcup_{i\in J}X_i\Big)\big\backslash X.\]
\end{notation}

\begin{remark}\label{FTDE}
Note that in Notation \ref{FTFE} the set $X_J^*$ is homeomorphic to $\beta X_J\backslash X_J$, as $\mbox{cl}_{\beta X}X_J$ is homeomorphic to $\beta X_J$ (see Corollary 3.6.8 of \cite{E}). Thus, when $J$ is countable (since $X_J$ is $\sigma$-compact and locally compact) $X_J^*$ is a zero-sets in $\mbox{cl}_{\beta X}X_J$ (see 1B of \cite{W}). But $\mbox{cl}_{\beta X}X_J$ is clopen in $\beta X$, as  $X_J$ is clopen in $X$ (see Corollary 3.6.5 of \cite{E}) therefore, $X_J^*$ is a zero-sets in $\beta X$.
Also, note that with the notation given in Notation \ref{FTFE}, we have
\[\sigma X=\bigcup\{\mbox{cl}_{\beta X} X_J:J\subseteq I\mbox{ is countable}\}.\]
Note that $\sigma X$ is open in $\beta X$ and it contains $X$.
\end{remark}

\section{Partially ordered set of one-point extensions as related to topologies of subspaces of outgrowth}

In Lemma \ref{JHV} we establish a connection between one-point Tychonoff extensions of a given space $X$ and compact non-empty
subsets of its outgrowth $X^*$. Lemma \ref{JHV} (and its preceding lemmas) is known (see e.g. \cite{MRW1}). It
is included here for the sake of completeness.

\begin{lemma}\label{KFH}
Let $X$ be a Tychonoff  space and let $C$ be a non-empty compact subset of $X^*$. Let $T$ be the space which is obtained from $\beta X$
by contracting $C$ to a point $p$. Then the subspace $Y=X\cup\{p\}$ of $T$ is Tychonoff and $\beta Y=T$.
\end{lemma}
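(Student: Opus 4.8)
The plan is to realize $T$ as a quotient of the compact Hausdorff space $\beta X$ and then to verify that it is a compactification of $Y$ enjoying the universal extension property that characterizes $\beta Y$. Write $q:\beta X\rightarrow T$ for the natural quotient map, so that $q$ collapses $C$ to the single point $p$ and is injective on $\beta X\backslash C$; thus $p=q(C)$ and $q^{-1}(p)=C$, while every other fiber of $q$ is a singleton. A subset of $\beta X$ is \emph{saturated} (a union of fibers) precisely when it is either disjoint from $C$ or contains $C$.

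First I would check that $T$ is compact and Hausdorff. Compactness is immediate, since $T=q(\beta X)$ is a continuous image of the compact space $\beta X$. For the Hausdorff property I would use that $\beta X$ is normal and $C$ is closed: given two distinct points of $T$, I separate them by disjoint \emph{saturated} open sets of $\beta X$, applying normality to the appropriate disjoint closed sets (two singletons $\{x\},\{y\}$ with $x,y\notin C$, or a singleton $\{y\}$ and $C$), and then noting that a quotient map carries saturated open sets to open sets. Since $C\cap X=\emptyset$, the set $X$ is saturated, so the same observation shows that $q$ restricts to a continuous open bijection of $X$ onto $q(X)$, that is, an embedding. Identifying $X$ with $q(X)$, the set $Y=X\cup\{p\}$ is a subspace of the Tychonoff (indeed compact Hausdorff) space $T$ and is therefore itself Tychonoff. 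Moreover $q(X)$ is dense in $T$, because $q(\beta X)=T$ and continuity gives $q(\mathrm{cl}_{\beta X}X)\subseteq\mathrm{cl}_T q(X)$; hence $Y$ is dense in $T$ and $T$ is a compactification of $Y$.

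It then remains to prove $\beta Y=T$, which I would do by verifying that every continuous $f:Y\rightarrow\mathbf{I}$ extends continuously over $T$. Restricting $f$ to $X$ and using the defining property of $\beta X$, I extend $f|_X$ to a continuous $g:\beta X\rightarrow\mathbf{I}$. The crux, and the main obstacle, is to show that $g$ is constant on $C$ with value $f(p)$; for once this is known, $g$ is constant on every fiber of $q$ and hence factors as $g=\bar f\circ q$ for a continuous $\bar f:T\rightarrow\mathbf{I}$ by the universal property of the quotient map, and this $\bar f$ restricts to $f$ on $Y$.

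To establish that constancy I would argue as follows. Fix $c\in C$ and $\varepsilon>0$. Continuity of $f$ at $p$ provides an open set $O\subseteq T$ with $p\in O$ and $f(O\cap Y)\subseteq(f(p)-\varepsilon,f(p)+\varepsilon)$. Then $q^{-1}(O)$ is an open neighborhood of $C$ in $\beta X$, and for each $x\in X\cap q^{-1}(O)$ we have $q(x)\in O\cap Y$, so $g(x)=f(x)\in(f(p)-\varepsilon,f(p)+\varepsilon)$. Thus $g$ maps the set $X\cap q^{-1}(O)$, which is dense in the open set $q^{-1}(O)$ since $X$ is dense in $\beta X$, into the closed interval $[f(p)-\varepsilon,f(p)+\varepsilon]$; by continuity $g$ then maps all of $q^{-1}(O)$, and in particular the point $c$, into that interval. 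As $\varepsilon$ was arbitrary, $g(c)=f(p)$. I expect the two delicate points to be the Hausdorffness of the quotient (controlled by normality of $\beta X$) and this final step, where density of $X$ in $\beta X$ is exactly what transfers the local behavior of $f$ near $p$ into control of $g$ on the whole collapsed set $C$.
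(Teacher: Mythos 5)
Your proof is correct, and while the overall skeleton matches the paper's (show $T$ is a compactification of $Y$, then verify that every continuous $f:Y\rightarrow\mathbf{I}$ extends over $T$), the crux is handled by a genuinely different mechanism. The paper extends not $f|_X$ but the composite $f\circ q$ restricted to $X\cup C$, invoking the fact that $\beta(X\cup C)=\beta X$ whenever $X\subseteq X\cup C\subseteq\beta X$ (Corollary 3.6.9 of Engelking); since that function is already constant on $C$ with value $f(p)$, its extension to $\beta X$ automatically factors through $q$, and no further argument is needed. You instead extend $f|_X$ alone to $g:\beta X\rightarrow\mathbf{I}$ and then must prove that $g$ is constant on $C$, which you do by an $\varepsilon$-argument: continuity of $f$ at $p$ controls $g$ on the dense set $X\cap q^{-1}(O)$, and density of $X$ in the open set $q^{-1}(O)$ transfers that control to all of $q^{-1}(O)\supseteq C$. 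That argument is sound (note that, by uniqueness of extensions from a dense subspace, your $g$ and the paper's $G$ are in fact the same function). The paper's route is shorter but leans on a nontrivial citation about Stone--\v{C}ech compactifications of intermediate subspaces; yours is more self-contained and elementary, at the cost of the explicit density computation. You also supply the Hausdorffness of $T$ via saturated open sets and normality of $\beta X$, and the embedding of $X$ into $T$, details the paper simply asserts; these are standard but worth having written out.
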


\begin{proof}
Let $q:\beta X\rightarrow T$ be the quotient mapping. Note that $T$ is Hausdorff, and thus, being a continuous image of $\beta X$, it is compact.
Also, note that $Y$ is dense in $T$. Therefore, $T$ is a compactification of $Y$. To show that $\beta Y=T$, it suffices to verify that every continuous
$h:Y\rightarrow{\mathbf I}$ is continuously extendable over $T$. Let $h:Y\rightarrow{\mathbf I}$ be  continuous.  Let $G:\beta X\rightarrow{\mathbf I}$  continuously extend $hq|(X\cup C):X\cup C\rightarrow{\mathbf I}$
(note that $\beta(X\cup C)=\beta X$, as $X\subseteq X\cup C\subseteq \beta X$; see Corollary 3.6.9 of \cite{E}). Define  $H:T\rightarrow{\mathbf I}$ such that
$H|(\beta X\backslash C)=G|(\beta X\backslash C)$ and $H(p)=h(p)$. Then $H|Y=h$, and since $Hq=G$ is continuous, the function $H$ is continuous.
\end{proof}

\begin{notation}\label{KJB}
Let $X$ be a Tychonoff space and let $Y\in {\mathscr E}(X)$. Denote by
\[\tau_Y:\beta X\rightarrow \beta Y\]
the (unique) continuous extension of $\mbox{id}_X$.
\end{notation}

\begin{lemma}\label{JFV}
Let $X$ be a Tychonoff space and let $Y=X\cup\{p\}\in {\mathscr E}(X)$. Let $T$ be the
space which is obtained from $\beta X$ by contracting  $\tau_Y^{-1}(p)$ to the
point $p$, and let $q:\beta X\rightarrow T$ be the
quotient mapping. Then $T=\beta Y$  and $\tau_Y=q$.
\end{lemma}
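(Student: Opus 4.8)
The plan is to realize $T$ as the Stone--\v{C}ech compactification of a one-point extension of $X$ that is equivalent to $Y$, and then to invoke uniqueness of the continuous extension over $\beta X$. Write $C=\tau_Y^{-1}(p)$. First I would check that $C$ is a non-empty compact subset of $X^*$: it is closed in the compact space $\beta X$, being the preimage of the closed point $p$ under the continuous map $\tau_Y$, hence compact; it misses $X$ because $\tau_Y$ fixes $X$ pointwise while $p\notin X$, so $C\subseteq X^*$; and it is non-empty because $\tau_Y$ has compact image containing the dense set $X$ and is therefore onto $\beta Y$. This is exactly the data needed to apply Lemma \ref{KFH}: contracting $C$ to a point $p$ produces $T$ with quotient map $q$, the subspace $Z=X\cup\{p\}$ of $T$ is Tychonoff, and $\beta Z=T$.

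The heart of the argument is then to show that the extension $Z$ produced by Lemma \ref{KFH} is equivalent to $Y$. Since $Y$ and $Z$ are both Tychonoff, hence $T_1$, the singleton remainders are closed, so $X$ is open in each of $Y$ and $Z$; therefore the topology of either extension is completely determined by the topology of $X$ together with the neighborhood filter at the adjoined point, and it suffices to match these two filters under the obvious bijection that fixes $X$ and carries $p\in Y$ to $p\in Z$. A basic open neighborhood of $p$ in $Y$ has trace $O\cap X$ on $X$ for some open $O\subseteq\beta Y$ containing $p$; putting $V=\tau_Y^{-1}(O)$ gives an open $V\supseteq C$ in $\beta X$ with $V\cap X=O\cap X$, which is precisely the trace of a basic neighborhood of $p$ in $Z$. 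This direction shows the bijection is open at $p$.

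The main obstacle is the reverse inclusion, that is, continuity at $p$: given an open $\tilde V\supseteq C$ in $\beta X$, which produces a basic neighborhood of $p$ in $Z$ with trace $\tilde V\cap X$, one must produce an open $O\ni p$ in $\beta Y$ with $O\cap X\subseteq\tilde V$. Here I would use that $\tau_Y$ is a closed map, being continuous from a compact space into a Hausdorff one: since $\tilde V$ is an open set containing the whole fiber $\tau_Y^{-1}(p)=C$, the set $O=\beta Y\setminus\tau_Y(\beta X\setminus\tilde V)$ is open, contains $p$, and satisfies $\tau_Y^{-1}(O)\subseteq\tilde V$, whence $O\cap X\subseteq\tilde V$. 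With the two filters matched, $Y$ and $Z$ are equivalent extensions, so $\beta Y$ and $\beta Z=T$ are equivalent compactifications of $X$, and identifying them gives $\beta Y=T$. Finally, $\tau_Y$ and $q$ are both continuous maps $\beta X\to T$ restricting to the inclusion of $X$, so by density of $X$ in $\beta X$ and Hausdorffness of $T$ they coincide, giving $\tau_Y=q$.
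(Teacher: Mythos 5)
Your proposal is correct and follows essentially the same route as the paper: both reduce to Lemma \ref{KFH} by verifying that the topology $T$ induces on $X\cup\{p\}$ agrees with that of $Y$, using the same two key computations (a saturated open set $\tau_Y^{-1}(O)$ pushes forward to an open set of $T$, and closedness of $\tau_Y$ handles the converse), and both conclude $\tau_Y=q$ by density. The only cosmetic difference is that you compare the two topologies only via the neighborhood filter at $p$ (justified by $X$ being open in each extension), whereas the paper compares arbitrary open sets directly.
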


\begin{proof}
We need to show that $Y$ is a subspace of $T$. Since $\beta Y$ is also a compactification of $X$ and
$\tau_Y|X=\mbox{id}_X$, by Theorem 3.5.7 of \cite{E}, we have  $\tau_Y(X^*)=\beta Y\backslash X$.  For an open subset $W$ of $\beta Y$, the set  $q(\tau_Y^{-1}(W))$ is open in $T$, as $q^{-1}(q(\tau_Y^{-1}(W)))=\tau_Y^{-1}(W)$ is open in $\beta X$. Therefore
\[Y\cap W=Y\cap q\big(\tau_Y^{-1}(W)\big)\]
is open in $Y$, when $Y$ is considered as  a subspace of $T$. For the converse, note that if $V$ is open in $T$, since
\[Y\cap V=Y\cap \big(\beta Y\backslash\tau_Y\big(\beta X\backslash q^{-1}(V)\big)\big)\]
and  $\tau_Y(\beta X\backslash q^{-1}(V))$ is compact and thus closed in $\beta Y$, the set $Y\cap V$ is open in $Y$ in its original topology.
By Lemma \ref{KFH} we have $T=\beta Y$. This
also implies that $\tau_Y=q$, as $\tau_Y,q:\beta X\rightarrow\beta Y$ are
continuous and coincide with $\mbox{id}_X$ on the dense subset $X$ of $\beta X$.
\end{proof}

\begin{lemma}\label{DFH}
Let $X$ be a Tychonoff space. Let $Y_i\in {\mathscr E}(X)$, where $i=1,2$, and denote by $\tau_i=\tau_{Y_i}:\beta X\rightarrow \beta Y_i$ the continuous
extension of $\mbox{\em id}_X$. The following are equivalent:
\begin{itemize}
\item[\rm(1)] $Y_1\leq Y_2$.
\item[\rm(2)] $\tau^{-1}_2(Y_2\backslash X)\subseteq\tau^{-1}_1(Y_1\backslash X)$.
\end{itemize}
\end{lemma}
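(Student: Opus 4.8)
The plan is to recast condition (2) in terms of the compact fibres supplied by Lemma \ref{JFV}, and then to pass back and forth between $Y_i$ and $\beta X$ using the universal property of the quotient map together with the uniqueness of the Stone--\v{C}ech extension. Write $Y_i=X\cup\{p_i\}$ and set $C_i=\tau_i^{-1}(p_i)=\tau_i^{-1}(Y_i\backslash X)$. By Lemma \ref{JFV}, $\tau_i\colon\beta X\rightarrow\beta Y_i$ is the quotient map obtained by contracting the non-empty compact set $C_i\subseteq X^*$ to $p_i$; in particular $\tau_i$ is surjective, its only non-degenerate fibre is $C_i$, and $\tau_i(X^*)=\beta Y_i\backslash X$. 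Thus condition (2) is literally the inclusion $C_2\subseteq C_1$, and the task reduces to showing $Y_1\leq Y_2\iff C_2\subseteq C_1$.

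For (1)$\Rightarrow$(2), suppose $f\colon Y_2\rightarrow Y_1$ is continuous and fixes $X$ point-wise, and let $\beta f\colon\beta Y_2\rightarrow\beta Y_1$ be its continuous extension. Both $\beta f\circ\tau_2$ and $\tau_1$ are continuous maps $\beta X\rightarrow\beta Y_1$ restricting to $\mbox{id}_X$ on the dense subset $X$, so they coincide by the uniqueness in Notation \ref{KJB}. Now fix $x\in C_2$; then $\tau_1(x)=\beta f(\tau_2(x))=\beta f(p_2)=f(p_2)$. Since $x\in X^*$ we have $\tau_1(x)\in\tau_1(X^*)=\beta Y_1\backslash X$, whereas $f(p_2)\in Y_1$, and these force $f(p_2)=p_1$ and $\tau_1(x)=p_1$, i.e. $x\in C_1$. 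Hence $C_2\subseteq C_1$. Note that the same computation yields, \emph{en passant}, the expected fact that $f$ carries the remainder into the remainder, so I need not argue this point separately.

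For (2)$\Rightarrow$(1), assume $C_2\subseteq C_1$. The map $\tau_1$ is constant, with value $p_1$, on $C_1$ and hence on $C_2$; since $C_2$ is the only non-degenerate fibre of the quotient map $\tau_2$, the map $\tau_1$ is constant on every fibre of $\tau_2$. By the universal property of quotient maps there is a unique continuous $g\colon\beta Y_2\rightarrow\beta Y_1$ with $g\circ\tau_2=\tau_1$. Setting $f:=g|Y_2$, we get $f(x)=g(\tau_2(x))=\tau_1(x)=x$ for $x\in X$, and $f(p_2)=g(p_2)=\tau_1(x)=p_1$ for any $x\in C_2$; thus $f$ maps $Y_2$ into $Y_1=X\cup\{p_1\}$, is continuous, and fixes $X$ point-wise, witnessing $Y_1\leq Y_2$.

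The step I expect to require the most care is the factorisation in (2)$\Rightarrow$(1): one must be certain that $\tau_1$ is constant on \emph{every} fibre of $\tau_2$, not merely on $C_2$. This is exactly where the hypothesis $C_2\subseteq C_1$ enters, and it relies on knowing from Lemma \ref{JFV} that $\tau_2$ is a quotient map whose sole non-trivial fibre is $C_2$. By contrast, the forward direction is essentially forced once one lifts $f$ to $\beta f$ and invokes the uniqueness of the extension of $\mbox{id}_X$.
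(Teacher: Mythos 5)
Your proof is correct and follows essentially the same route as the paper: both directions rest on Lemma \ref{JFV} together with the identity $f_\beta\circ\tau_2=\tau_1$ forced by density of $X$ in $\beta X$. The only (harmless) variation is in (2)$\Rightarrow$(1), where you factor $\tau_1$ through the quotient map $\tau_2$ at the level of $\beta Y_2$ and then restrict to $Y_2$, whereas the paper views $Y_2$ directly as a quotient of $X\cup\tau_2^{-1}(p_2)$ and checks continuity of $f\circ\tau_2$ there; both hinge on the same computation that $f\tau_2$ and $\tau_1$ agree on $X\cup C_2$.
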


\begin{proof}
Let $Y_i=X\cup\{p_i\}$ where $i=1,2$.  (1) {\em  implies} (2). Suppose that (1) holds. By definition, there exists a continuous
$f:Y_2\rightarrow Y_1$ such that $f|X= \mbox{id}_X$. Let $f_\beta:\beta Y_2\rightarrow \beta Y_1$  continuously  extend $f$. Note that the continuous
functions $f_\beta\tau_2,\tau_1:\beta X\rightarrow \beta Y_1$ coincide  with $\mbox{id}_X$ on the dense subset $X$ of $\beta X$, and thus
$f_\beta\tau_2= \tau_1$. Note that $X$ is dense in $\beta Y_i$ (where $i=1,2$), as it is  dense in $Y_i$, and therefore,  $\beta Y_i$ is a compactification
of $X$. Since $f_\beta|X=\mbox{id}_X$, by Theorem 3.5.7 of \cite{E}, we have $f_\beta(\beta Y_2\backslash X)=\beta Y_1\backslash X$, and thus $f_\beta(p_2)\in \beta Y_1\backslash X$. But $f_\beta(p_2)=f(p_2)$, which implies that $f_\beta(p_2)\in Y_1\backslash X=\{p_1\}$.  Therefore
\begin{eqnarray*}
\tau^{-1}_2(p_2)&\subseteq&\tau^{-1}_2\big(f_\beta^{-1}\big(f_\beta(p_2)\big)\big)\\&=&(f_\beta\tau_2)^{-1}\big(f_\beta(p_2)\big)
=\tau_1^{-1}\big(f_\beta(p_2)\big)=\tau_1^{-1}(p_1).
\end{eqnarray*}

(2) {\em  implies} (1).  Suppose that (2) holds. Let  $f:Y_2\rightarrow Y_1$ be defined  such that $f(p_2)=p_1$ and $f|X=\mbox{id}_X$.
We show that $f$ is continuous, this will show that $Y_1\leq Y_2$. Note that by Lemma \ref{JFV}, the space $\beta Y_2$ is the quotient space of $\beta X$ which is obtained
by contracting  $\tau^{-1}_2(p_2)$  to a point, and $\tau_2$ is its corresponding quotient mapping. Thus, in particular, $Y_2$ is the quotient space of
$X\cup \tau^{-1}_2 (p_2)$, and therefore, to show that $f$ is continuous, it suffices to show that $f\tau_2|(X\cup\tau^{-1}_2 (p_2))$ is continuous. We show this by verifying that
$f\tau_2(t)=\tau_1(t)$ for each $t\in X\cup\tau^{-1}_2 (p_2)$. This obviously holds if $t\in X$. If $t\in \tau^{-1}_2 (p_2)$, then $\tau_2(t)=p_2$, and thus   $f\tau_2 (t)=p_1$. But since $t\in \tau^{-1}_2(\tau_2 (t))$, we have $t\in \tau^{-1}_1(p_1)$, and therefore $\tau_1(t)=p_1$. Thus
$f\tau_2(t)=\tau_1(t)$ in this case as well.
\end{proof}

\begin{lemma}\label{JHV}
Let $X$ be a Tychonoff space. Define a function
\[\Theta:\big({\mathscr E}(X),\leq\big)\rightarrow\big(\{C\subseteq X^*:C \mbox{ is  compact} \}\backslash\{\emptyset\},\subseteq\big)\]
by
\[\Theta(Y)=\tau^{-1}_Y(Y\backslash X)\]
for any $Y\in{\mathscr E}(X)$. Then $\Theta$ is an anti-order-isomorphism.
\end{lemma}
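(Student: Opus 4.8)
The plan is to establish in turn that $\Theta$ is well-defined, that it is an anti-order-homomorphism, that it is injective, and that it is surjective with an anti-order-homomorphic inverse; almost all of the real work has already been carried out in the three preceding lemmas, so the argument is largely a matter of assembling them. I first verify well-definedness. Write $Y=X\cup\{p\}$, so that $\Theta(Y)=\tau_Y^{-1}(p)$. The map $\tau_Y:\beta X\rightarrow\beta Y$ is continuous with compact domain and Hausdorff codomain, so its image is closed and contains the dense set $X=\tau_Y(X)$; hence $\tau_Y$ is surjective, $\tau_Y^{-1}(p)$ is non-empty, and being the preimage of the (closed) point $p$ under a continuous map on a compact space it is also compact. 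Moreover $\tau_Y^{-1}(p)\subseteq X^*$, since $\tau_Y|X=\mathrm{id}_X$ forces $\tau_Y(x)=x\neq p$ for every $x\in X$. Thus $\Theta(Y)$ genuinely lies in the target poset.

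The order behaviour and injectivity then follow directly from Lemma \ref{DFH}, which supplies the equivalence $Y_1\leq Y_2\Leftrightarrow\Theta(Y_2)\subseteq\Theta(Y_1)$ for all $Y_1,Y_2\in{\mathscr E}(X)$. The forward implication is exactly the statement that $\Theta$ is an anti-order-homomorphism, and the reverse implication, once $\Theta$ is known to be bijective, shows that $\Theta^{-1}$ is one as well. For injectivity, if $\Theta(Y_1)=\Theta(Y_2)$ then both inclusions hold, so $Y_1\leq Y_2$ and $Y_2\leq Y_1$; since $\leq$ is a partial order on the (equivalence classes of) extensions of $X$, this gives $Y_1=Y_2$.

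The only genuinely constructive step, and the place where care is needed, is surjectivity. Given a non-empty compact $C\subseteq X^*$, I would invoke Lemma \ref{KFH}: contracting $C$ to a point $p$ through the quotient map $q:\beta X\rightarrow T$ yields a Tychonoff extension $Y=X\cup\{p\}\in{\mathscr E}(X)$ with $\beta Y=T$. It remains only to confirm $\Theta(Y)=C$. Because $C\cap X=\emptyset$, the quotient $q$ restricts to the identity on $X$, so $q$ and $\tau_Y$ are two continuous maps $\beta X\rightarrow\beta Y$ that agree on the dense subset $X$ and land in a Hausdorff space; hence $\tau_Y=q$, which is precisely the identification already recorded in Lemma \ref{JFV}. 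Since $q$ contracts exactly $C$ to $p$, we have $q^{-1}(p)=C$, and therefore $\Theta(Y)=\tau_Y^{-1}(p)=C$. This yields surjectivity and, together with the previous paragraph, completes the proof that $\Theta$ is an anti-order-isomorphism. The main obstacle is thus entirely contained in this last step: one must be careful to identify the quotient map $q$ of Lemma \ref{KFH} with the canonical extension $\tau_Y$ so that $q^{-1}(p)=C$ can be read off as $\Theta(Y)$; everything else reduces to the biconditional of Lemma \ref{DFH}.
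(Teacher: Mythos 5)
Your proposal is correct and follows essentially the same route as the paper: well-definedness via surjectivity of $\tau_Y$ and compactness of the fibre, surjectivity of $\Theta$ via Lemma \ref{KFH} together with the identification $\tau_Y=q$, and the order-reversal plus injectivity read off from the biconditional of Lemma \ref{DFH}. The only difference is that you spell out explicitly the injectivity and inverse-homomorphism steps that the paper compresses into the phrase ``Lemma \ref{DFH} will then complete the proof.''
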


\begin{proof}
To show that  $\Theta$ is well-defined, let $Y\in {\mathscr E}(X)$. Note that since $X$ is  dense in $Y$, the space  $X$ is dense in  $\beta Y$. Thus   $\tau_Y:\beta X\rightarrow \beta Y$ is onto, as $\tau_Y(\beta X)$ is a compact (and therefore closed) subset of $\beta Y$ and it contains $X=\tau_Y(X)$. Thus  $\tau^{-1}_Y(Y\backslash X)\neq\emptyset$. Also,  since $\tau_Y|X=\mbox{id}_X$ we have $\tau^{-1}_Y(Y\backslash X)\subseteq X^*$, and since the singleton $Y\backslash X$ is closed in $\beta Y$, its inverse image  $\tau^{-1}_Y(Y\backslash X)$ is closed in $\beta X$, and therefore  it is compact.
Now we show that $\Theta$ is onto,  Lemma \ref{DFH} will then complete the proof. Let $C$  be a non-empty  compact subset of $X^*$. Let $T$ be the quotient space of $\beta X$ which is obtained by contracting $C$ to a point $p$. Consider the subspace $Y=X\cup\{p\}$  of $T$. Then
$Y\in {\mathscr E}(X)$, and thus, by Lemma \ref{KFH} we have $\beta Y=T$. The quotient mapping $q:\beta X\rightarrow T$ is identical to $\tau_Y$, as it coincides with $\mbox{id}_X$ on the dense subset $X$ of $\beta X$. Therefore
\[\Theta(Y)=\tau^{-1}_Y(p)=q^{-1}(p)=C.\]
\end{proof}

\begin{notation}\label{JLLHV}
For a  Tychonoff space $X$ denote by
\[\Theta_X:\big({\mathscr E}(X),\leq\big)\rightarrow\big(\{C\subseteq X^*:C \mbox{ is compact} \}\backslash\{\emptyset\},\subseteq\big)\]
the anti-order-isomorphism defined by
\[\Theta_X(Y)=\tau^{-1}_Y(Y\backslash X)\]
for any $Y\in {\mathscr E}(X)$.
\end{notation}

Lemmas \ref{JYYH} and \ref{HDJYYH} below are known results (see \cite{Ko2}).

\begin{lemma}\label{JYYH}
Let $X$ be a Tychonoff  space. For a $Y\in {\mathscr E}(X)$ the following are equivalent:
\begin{itemize}
\item[\rm(1)] $Y\in {\mathscr E}^{\,*}(X)$.
\item[\rm(2)] $\Theta_X(Y)\in {\mathscr Z}(\beta X)$.
\end{itemize}
\end{lemma}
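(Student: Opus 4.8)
The plan is to make the topology of $Y$ at $p$ completely explicit by means of Lemma \ref{JFV}, and then read off the zero-set condition. Write $C=\Theta_X(Y)=\tau_Y^{-1}(Y\backslash X)$, a non-empty compact subset of $X^*$, and recall from Lemma \ref{JFV} that $\beta Y$ is the quotient $q:\beta X\rightarrow\beta Y$ obtained by collapsing $C$ to the point $p$, with $q=\tau_Y$. Since $C$ is one whole fibre of $q$ and every other fibre is a singleton, an open subset of $\beta X$ is $q$-saturated as soon as it contains $C$; hence the open neighbourhoods of $p$ in $\beta Y$ are exactly the sets $q(U)$ with $U$ open in $\beta X$ and $C\subseteq U$. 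Intersecting with $Y$ and using $C\cap X=\emptyset$, I obtain that
\[\big\{\{p\}\cup(U\cap X):U\mbox{ open in }\beta X\mbox{ and } C\subseteq U\big\}\]
is a neighbourhood base of $p$ in $Y$. Thus first-countability of $Y$ at $p$ is precisely the assertion that this base can be realized by a countable subfamily.

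For $(2)\Rightarrow(1)$, suppose $C=Z(f)$ for some continuous $f:\beta X\rightarrow\mathbf{I}$ and put $U_n=f^{-1}([0,1/n))$. Each $U_n$ is open and contains $C$, and compactness of $\beta X$ shows that $\{U_n\}$ is a neighbourhood base of $C$: if $U\supseteq C$ is open, then $f$ attains a positive minimum on the compact set $\beta X\backslash U$, whence $U_n\subseteq U$ for all large $n$. Consequently $\{\{p\}\cup(U_n\cap X)\}$ is a countable local base at $p$ by the description above, so $Y\in{\mathscr E}^{\,*}(X)$.

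The substantial direction is $(1)\Rightarrow(2)$. From a countable local base at $p$ I first extract, using the explicit base above, a countable family $\{U_n\}$ of open subsets of $\beta X$ with $C\subseteq U_n$ and with the cofinality property that for every open $U\supseteq C$ there is some $n$ with $U_n\cap X\subseteq U$; replacing $U_n$ by $U_1\cap\cdots\cap U_n$ I may assume the family is decreasing. Because $\beta X$ is compact Hausdorff, hence normal, it suffices to prove that $C$ is a $G_\delta$, since a closed $G_\delta$ in a normal space is a zero-set. My plan is to show $C=\bigcap_n U_n$, which then lies in ${\mathscr Z}(\beta X)$. The containment $C\subseteq\bigcap_n U_n$ is clear; for the reverse I use the sandwich $C\subseteq U_n\subseteq\mbox{cl}_{\beta X}(U_n\cap X)$, the second inclusion holding because $U_n\cap X$ is dense in the open set $U_n$. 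Given $z\in\beta X\backslash C$, regularity of $\beta X$ furnishes an open $U\supseteq C$ with $z\notin\mbox{cl}_{\beta X}U$; the cofinality property yields $n$ with $U_n\cap X\subseteq U$, so $\mbox{cl}_{\beta X}(U_n\cap X)\subseteq\mbox{cl}_{\beta X}U$ and therefore $z\notin U_n$. Hence $\bigcap_n U_n=C$, giving the required $G_\delta$ representation.

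The one delicate point is that first-countability of the subspace $Y=X\cup\{p\}$ controls only the traces $U\cap X$ on $X$ and says nothing directly about how the points of $X^*\backslash C$ sit relative to $C$; a priori the sets $U_n$ could cling to such points. The device that overcomes this is precisely the sandwich inclusion $U_n\subseteq\mbox{cl}_{\beta X}(U_n\cap X)$, which converts control of the traces into control of the sets $U_n$ themselves once it is combined with the regularity of $\beta X$. I expect this to be the crux of the argument.
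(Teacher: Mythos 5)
Your proof is correct and takes essentially the same approach as the paper's: the forward direction hinges on the same density observation (an open subset of $\beta X$ lies in the closure of its trace on $X$), and the converse is the same compactness argument producing the base $f^{-1}([0,1/n))$. The only cosmetic difference is that you work entirely in $\beta X$ via the quotient description of Lemma \ref{JFV} and invoke the fact that a closed $G_\delta$ in a normal space is a zero-set, whereas the paper exhibits $\{p\}$ as a zero-set of $\beta Y$ explicitly with Urysohn functions and pulls it back along $\tau_Y$.
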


\begin{proof}
Let $Y=X\cup\{p\}$. (1) {\em  implies} (2).  Suppose that (1) holds. Let $\{V_n:n\in\mathbf{N}\}$ be an open base at $p$ in $Y$. For each
$n\in \mathbf{N}$, let $V'_n$ be an open subset of $\beta Y$ such that $Y\cap V'_n=V_n$, and let $f_n:\beta Y\rightarrow\mathbf{I}$ be continuous and such that $f_n(p)=0$ and $f_n(\beta Y\backslash V'_n)\subseteq\{1\}$. Let
\[Z=\bigcap_{n=1}^\infty Z(f_n)\in {\mathscr Z}(\beta Y).\]
We show that $Z=\{p\}$. Obviously, $p\in Z$. Let $t\in Z$ and suppose to the contrary that $t\neq p$. Let $W$ be an open neighborhood of $p$ in $\beta Y$ such that
$t\notin \mbox{cl}_{\beta Y}W$. Then $Y\cap W$ is an open neighborhood of $p$ in $Y$. Let  $k\in \mathbf{N}$ be such that $V_k\subseteq Y\cap W$. We have
\begin{eqnarray*}
t\in Z(f_k)\subseteq V'_k &\subseteq&\mbox{cl}_{\beta Y}V'_k\\&=&\mbox{cl}_{\beta Y}(Y\cap V'_k)\\&=&\mbox{cl}_{\beta Y}V_k\subseteq\mbox{cl}_{\beta Y}(Y\cap W)\subseteq\mbox{cl}_{\beta Y}W
\end{eqnarray*}
which is a contradiction. This shows that  $t=p$ and therefore $Z\subseteq \{p\}$. Thus $\{p\}=Z\in {\mathscr Z}(\beta Y)$, which implies that
$\tau^{-1}_Y(p)\in {\mathscr Z}(\beta X)$.

(2) {\em  implies} (1).  Suppose that (2) holds. Let $\tau^{-1}_Y(p)=Z(f)$ where $f:\beta X\rightarrow\mathbf{I}$ is continuous.  Note that by
Lemma \ref{JFV} the space $\beta Y$ is obtained from $\beta X$ by contracting $\tau^{-1}_Y(p)$ to $p$ with $\tau_Y:\beta X\rightarrow \beta Y$ as the quotient
mapping. Then for each $n\in \mathbf{N}$ the set $\tau_Y(f^{-1}([0,1/n)))$ is an open neighborhood of $p$ in $\beta Y$. We show that the collection
\[\big\{Y\cap\tau_Y\big(f^{-1}\big([0,1/n)\big)\big):n\in \mathbf{N}\big\}\]
of open neighborhoods of $p$ in $Y$ constitutes an open base at $p$ in $Y$, this will show (1).  Let $V$ be an open neighborhood of $p$ in $Y$.
Let $V'$ be an open subset of $\beta Y$ such that $Y\cap V'=V$. Then $p\in V'$ and thus
\[\bigcap_{n=1}^\infty f^{-1}\big([0,1/n]\big)=Z(f)=\tau_Y^{-1}(p)\subseteq\tau_Y^{-1}(V').\]
By compactness we have $f^{-1}([0,1/k])\subseteq\tau_Y^{-1}(V')$ for some  $k\in \mathbf{N}$. Therefore
\begin{eqnarray*}
Y\cap\tau_Y\big(f^{-1}\big([0,1/k)\big)\big)&\subseteq& Y\cap\tau_Y\big(f^{-1}\big([0,1/k]\big)\big)\\&\subseteq& Y\cap\tau_Y\big(\tau_Y^{-1}(V')\big)\subseteq Y\cap V'=V.
\end{eqnarray*}
\end{proof}

\begin{lemma}\label{HDJYYH}
Let $X$ be a locally compact  space. For a $Y\in {\mathscr E}(X)$ the following are equivalent:
\begin{itemize}
\item[\rm(1)] $Y\in {\mathscr E}^{\,C}(X)$.
\item[\rm(2)] $\Theta_X(Y)\in {\mathscr Z}(X^*)$.
\end{itemize}
\end{lemma}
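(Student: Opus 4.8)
The plan is to exploit the quotient description of $\beta Y$ from Lemma \ref{JFV} to identify the remainder $\beta Y\backslash Y$ with the open subspace $X^*\backslash C$ of $X^*$, where $C=\Theta_X(Y)=\tau_Y^{-1}(p)$ and $Y=X\cup\{p\}$, and then to translate the defining $F_\sigma$-condition for \v{C}ech-completeness into an intrinsic statement about $C$ inside $X^*$.

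First I would record the relevant compactness facts. Since $X$ is locally compact it is open in $\beta X$, so $X^*=\beta X\backslash X$ is closed in $\beta X$, hence compact and (being Hausdorff) normal. By Lemma \ref{JFV}, $\beta Y$ is the quotient of $\beta X$ obtained by collapsing $C$ to $p$, with $\tau_Y$ the quotient map. As $C$ is the only non-degenerate fiber and $\beta X\backslash C$ is open and saturated, the restriction $\tau_Y|(\beta X\backslash C)$ is a homeomorphism onto the open set $\beta Y\backslash\{p\}$. Since $\tau_Y$ fixes $X$ pointwise and $C\subseteq X^*$, this restriction carries $X^*\backslash C=\beta X\backslash(X\cup C)$ homeomorphically onto $\beta Y\backslash(X\cup\{p\})=\beta Y\backslash Y$.

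The bridge between the two ambient spaces is $\sigma$-compactness, which is a topological invariant. Because both $\beta Y$ and $X^*$ are compact Hausdorff, a subset is $F_\sigma$ in the ambient space precisely when it is $\sigma$-compact: closed subsets of a compact space are compact, and compact subsets of a Hausdorff space are closed. Applying this in $\beta Y$ and transporting along the homeomorphism of the previous paragraph yields
\[Y\in{\mathscr E}^{\,C}(X)\iff\beta Y\backslash Y\text{ is }F_\sigma\text{ in }\beta Y\iff\beta Y\backslash Y\text{ is }\sigma\text{-compact}\iff X^*\backslash C\text{ is }\sigma\text{-compact},\]
and running the same principle inside $X^*$ shows this holds iff $X^*\backslash C$ is $F_\sigma$ in $X^*$, i.e. iff $C$ is a $G_\delta$ of $X^*$.

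It remains to identify closed $G_\delta$ subsets of $X^*$ with zero-sets. Every zero-set is a closed $G_\delta$; conversely, in a normal space every closed $G_\delta$ set is a zero-set. Since $X^*$ is normal and $C$ is closed, $C\in{\mathscr Z}(X^*)$ iff $C$ is a $G_\delta$ of $X^*$, which closes the chain of equivalences. I expect the main obstacle to be the bookkeeping around the collapsed point $p$: one must verify that $\tau_Y$ genuinely restricts to a homeomorphism off $C$ and, more importantly, that the $F_\sigma$-condition \emph{in $\beta Y$} converts into a condition \emph{intrinsic} to $X^*$ rather than merely an $F_\sigma$-condition relative to the larger space $\beta Y$. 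Routing everything through $\sigma$-compactness, which is independent of the ambient compactification, is exactly what makes this transfer clean.
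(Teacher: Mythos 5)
Your proof is correct, and it rests on the same skeleton as the paper's: both arguments start from the quotient description of $\beta Y$ in Lemma \ref{JFV} and the resulting identification of $\beta Y\backslash Y$ with $X^*\backslash C$, where $C=\Theta_X(Y)$. The difference lies in how the $F_\sigma$-versus-zero-set translation is carried out. The paper does it by explicit construction: for (1)$\Rightarrow$(2) it writes $Y^*=\bigcup_{n=1}^\infty K_n$ with $K_n$ closed in $\beta Y$, chooses Urysohn functions $f_n$ separating $\tau_Y^{-1}(p)$ from $K_n$, and exhibits $C=Z\bigl(\sum_{n=1}^\infty f_n/2^n\bigr)\cap X^*$; for (2)$\Rightarrow$(1) it writes $Y^*=\bigcup_{n=1}^\infty g^{-1}([1/n,1])$ directly. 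You instead route both directions through two standard abstract facts: that in a compact Hausdorff ambient space the $F_\sigma$ subsets are exactly the $\sigma$-compact ones (so the condition becomes intrinsic and transports along the homeomorphism for free), and that in a normal space the zero-sets are exactly the closed $G_\delta$ sets, applied to the compact Hausdorff space $X^*$. Your version is shorter and makes transparent why the choice of ambient compactification is irrelevant; the paper's version is more self-contained, producing the witnessing functions by hand rather than citing the closed-$G_\delta$ characterization. Both are complete proofs, and your reduction of the only delicate point to the bookkeeping around the collapsed point $p$ (i.e., checking that $\tau_Y$ restricts to a homeomorphism off $C$, which follows since $\beta X\backslash C$ is open and saturated and $\tau_Y$ is a closed map) is accurate.
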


\begin{proof}
Let $Y=X\cup\{p\}$. (1) {\em  implies} (2).  Suppose that (1) holds. Then $Y^*$ is an
$F_\sigma$ in $\beta Y$. Let $Y^*=\bigcup_{n=1}^\infty K_n$ where each $K_n$ is closed in $\beta Y$ for $n\in\mathbf{N}$. Then
\[X^*=\tau^{-1}_Y(p)\cup\bigcup_{n=1}^\infty K_n\]
(recall that $\beta Y$ is the quotient space of $\beta X$ which is obtained by contracting $\tau^{-1}_Y(p)$ to $p$ and $\tau_Y$ is its quotient mapping; see
Lemma \ref{JFV}). For each  $n\in \mathbf{N}$, let  $f_n:\beta X\rightarrow \mathbf{I}$ be continuous and  such that
\[f_n\big(\tau^{-1}_Y(p)\big)=\{0\}\mbox{ and }f_n(K_n)\subseteq\{1\}.\]
Let $f=\sum_{n=1}^\infty f_n/2^n$. Then $f:\beta X\rightarrow \mathbf{I}$ is continuous and
\[\tau^{-1}_Y(p)=Z(f)\cap X^*\in {\mathscr Z}(X^*).\]

(2) {\em  implies} (1).  Suppose that (2) holds. Let  $\tau^{-1}_Y(p)=Z(g)$ where  $g:X^*\rightarrow \mathbf{I}$ is continuous. Then, using  Lemma \ref{JFV}, we have
\begin{eqnarray*}
Y^*=X^*\backslash\tau^{-1}_Y(p)&=&X^*\backslash Z(g)\\&=&g^{-1}\big((0,1]\big)=\bigcup_{n=1}^\infty g^{-1}\big([1/n,1]\big)
\end{eqnarray*}
and  each  set $g^{-1}([1/n,1])$, for $n\in \mathbf{N}$, being closed in $X^*$, is compact (note that since $X$  is locally compact,
$X^*$ is compact) and thus closed in $\beta Y$.  Therefore, $Y^*$ is an $F_\sigma$ in $\beta Y$, that is,
$Y$ is  \v{C}ech-complete.
\end{proof}

The following lemma justifies our requirement on ${\mathcal P}$ in Theorem \ref{HJHGF}. We simply need $\lambda_{{\mathcal P}} X$ to have a more familiar structure.

\begin{lemma}\label{GFFYH}
Let  ${\mathcal P}$ be a topological property which is preserved under finite closed sums of subspaces. The  following are equivalent:
\begin{itemize}
\item[\rm(1)] The topological property ${\mathcal P}$ coincides with $\sigma$-compactness in the realm of locally compact paracompact spaces.
\item[\rm(2)] For every locally compact paracompact space $X$ we have
  \[\lambda_{{\mathcal P}} X=\sigma X.\]
\end{itemize}
\end{lemma}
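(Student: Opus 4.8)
The plan is to prove the two implications separately, treating the equality $\lambda_{\mathcal P}X=\beta X$ (equivalently $\sigma X=\beta X$) as the pivot of the whole argument. Throughout I would use that $\sigma X$ is open in $\beta X$ and that, by Remark \ref{FTDE}, $\sigma X=\bigcup\{\mbox{cl}_{\beta X}X_J:J\subseteq I\mbox{ countable}\}$ with each $\mbox{cl}_{\beta X}X_J$ clopen in $\beta X$; also that any $\sigma$-compact subset $H\subseteq X$ satisfies $\mbox{cl}_{\beta X}H\subseteq\sigma X$ by definition. Two elementary observations drive everything: if $C\in Coz(X)$ then $\mbox{cl}_X C$ is closed in $X$, hence locally compact (closed in a locally compact space) and paracompact (closed in a paracompact space), so the coincidence in (1) is applicable to it; and $\mbox{cl}_{\beta X}C=\mbox{cl}_{\beta X}(\mbox{cl}_X C)$ since $C$ is dense in $\mbox{cl}_X C$. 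The compact case $X=\beta X$ will be subsumed below, so I need not treat it separately.

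For (1)$\Rightarrow$(2), I would use the coincidence to rewrite $\lambda_{\mathcal P}X$: since each $\mbox{cl}_X C$ is locally compact paracompact, it has ${\mathcal P}$ if and only if it is $\sigma$-compact. For $\lambda_{\mathcal P}X\subseteq\sigma X$, given $C\in Coz(X)$ with $\mbox{cl}_X C$ having ${\mathcal P}$, the set $\mbox{cl}_X C$ is a $\sigma$-compact subset of $X$, so $\mbox{int}_{\beta X}\mbox{cl}_{\beta X}C\subseteq\mbox{cl}_{\beta X}(\mbox{cl}_X C)\subseteq\sigma X$. For the reverse inclusion I would invoke the representation from Remark \ref{FTDE}: for countable $J\subseteq I$ the set $X_J$ is clopen in $X$, hence a cozero-set which is (being closed) paracompact, (being open) locally compact, and $\sigma$-compact, so by the coincidence $X_J=\mbox{cl}_X X_J$ has ${\mathcal P}$; since $\mbox{cl}_{\beta X}X_J$ is clopen it equals its own interior, whence $\mbox{cl}_{\beta X}X_J=\mbox{int}_{\beta X}\mbox{cl}_{\beta X}X_J\subseteq\lambda_{\mathcal P}X$, and taking the union over countable $J$ gives $\sigma X\subseteq\lambda_{\mathcal P}X$.

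For (2)$\Rightarrow$(1) I would establish the coincidence for an arbitrary locally compact paracompact space $X$ as two claims. If $X$ is $\sigma$-compact then $\mbox{cl}_{\beta X}X=\beta X\subseteq\sigma X$, so $\sigma X=\beta X$, and (2) forces $\lambda_{\mathcal P}X=\beta X$; the open sets $\mbox{int}_{\beta X}\mbox{cl}_{\beta X}C$ then cover the compact space $\beta X$, so finitely many $\mbox{cl}_{\beta X}C_1,\dots,\mbox{cl}_{\beta X}C_n$ cover it, and intersecting with $X$ yields $X=\bigcup_{k=1}^n\mbox{cl}_X C_k$, a finite union of closed subspaces each having ${\mathcal P}$; preservation under finite closed sums then gives that $X$ has ${\mathcal P}$ (this is the only place the finite-closed-sum hypothesis is needed). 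Conversely, if $X$ has ${\mathcal P}$ then, $X$ being a cozero-set of itself (the complement of the empty zero-set) with $\mbox{cl}_X X=X$ having ${\mathcal P}$, we get $\beta X=\mbox{int}_{\beta X}\mbox{cl}_{\beta X}X\subseteq\lambda_{\mathcal P}X$, so $\lambda_{\mathcal P}X=\beta X$ and (2) gives $\sigma X=\beta X$. Using the clopen cover $\{\mbox{cl}_{\beta X}X_J:J\mbox{ countable}\}$ of the compact $\beta X$ and a finite subcover, I would pass to a single countable $J_0$ with $\mbox{cl}_{\beta X}X_{J_0}=\beta X$; intersecting with $X$ gives $X=X_{J_0}$ with $J_0$ countable, so $X$ is $\sigma$-compact.

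The routine part is the bookkeeping with closures in $\beta X$ and checking that the relevant subspaces are locally compact paracompact so the coincidence applies. The main obstacle is conceptual rather than computational: in (2)$\Rightarrow$(1) one must extract a statement about the single space $X$ from a hypothesis quantified over all spaces. The key idea is that the pivot $\lambda_{\mathcal P}X=\beta X$ is reached by feeding $X$ into (2) in two guises—once as a $\sigma$-compact space (forcing $\sigma X=\beta X$, hence $\lambda_{\mathcal P}X=\beta X$) and once as a ${\mathcal P}$-space (forcing $\lambda_{\mathcal P}X=\beta X$, hence $\sigma X=\beta X$)—after which compactness of $\beta X$ converts an open cover by cozero-set closures into a finite closed-sum description (resp. a countable-index description) of $X$.
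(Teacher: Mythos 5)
Your proposal is correct and follows essentially the same route as the paper's proof: the same two inclusions for (1)$\Rightarrow$(2) (clopen sets $X_J$ for $\sigma X\subseteq\lambda_{\mathcal P}X$, and $\mbox{cl}_XC$ being locally compact paracompact with ${\mathcal P}$ hence $\sigma$-compact for the reverse), and for (2)$\Rightarrow$(1) the same pivot through $\lambda_{\mathcal P}X=\sigma X=\beta X$ followed by extracting a finite subcover of $\beta X$ in each direction. The only differences are presentational (the order of the two halves of the biconditional and your explicit remark that $X\in Coz(X)$).
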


\begin{proof}
(1) {\em  implies} (2).  Suppose that (1) holds. Let $X$ be a locally compact paracompact space. Assume the notation of Notation  \ref{FTFE}. Let $J\subseteq I$ be countable. Then $X_J$ is $\sigma$-compact and thus (since it is also locally compact and paracompact) it has
${\mathcal P}$. Note that $X_J$ is clopen in $X$ thus it has a clopen  closure in $\beta X$, therefore
\[\mbox{cl}_{\beta X}X_J=\mbox{int}_{\beta X}\mbox{cl}_{\beta X}X_J\subseteq\lambda_{{\mathcal P}} X\]
that is, $\sigma X\subseteq\lambda_{{\mathcal P}} X$. To see the reverse inclusion, let $C\in Coz(X)$ be such that $\mbox{cl}_X C$  has ${\mathcal P}$.
Then  (since $\mbox{cl}_X C$ being closed in $X$ is also locally compact and paracompact)  $\mbox{cl}_X C$ is $\sigma$-compact. Therefore
\[\mbox{int}_{\beta X}\mbox{cl}_{\beta X}C\subseteq\mbox{cl}_{\beta X}C\subseteq\sigma X\]
which shows that $\lambda_{{\mathcal P}} X\subseteq\sigma X$. Thus  $\lambda_{{\mathcal P}} X=\sigma X$.

(2) {\em  implies} (1).  Suppose that (2) holds. Let $X$ be a locally compact paracompact space. By assumption we have $\lambda_{{\mathcal P}} X=\sigma X$. We verify that $X$ has ${\mathcal P}$ if and only if $X$ is $\sigma$-compact. Assume the notation of Notation  \ref{FTFE}. Suppose that $X$ has ${\mathcal P}$. Then $\lambda_{{\mathcal P}} X=\beta X$ and thus  $\sigma X=\beta X$.  Now, by compactness, we have
\[\beta X=\mbox{cl}_{\beta X}X_{J_1}\cup\cdots\cup\mbox{cl}_{\beta X}X_{J_n}\]
for some $n\in \mathbf{N}$ and some countable $J_1, \ldots,  J_n\subseteq I$. Therefore
\[X=X_{J_1} \cup\cdots\cup X_{J_n}\]
is $\sigma$-compact. For the converse, suppose that $X$ is $\sigma$-compact. Then $\sigma X=\beta X$ and (since $\lambda_{{\mathcal P}} X=\sigma X$)
we have  $\beta X=\lambda_{{\mathcal P}} X$. Thus, by compactness, we have
\[\beta X=\mbox{int}_{\beta X}\mbox{cl}_{\beta X}C_1\cup\cdots\cup\mbox{int}_{\beta X}\mbox{cl}_{\beta X}C_n\]
for some  $n\in \mathbf{N}$ and some $C_1,\ldots, C_n\in Coz(X)$ such that  $\mbox{cl}_X C_i$ has ${\mathcal P}$ for any $i=1,\ldots,n$. Now, using our
assumption, the space
\[X=\mbox{cl}_XC_1\cup\cdots\cup\mbox{cl}_XC_n\]
being a finite union of its closed   ${\mathcal P}$-subspaces, has  ${\mathcal P}$.
\end{proof}

\begin{lemma}\label{HJGHG}
Let $X$ be a locally compact paracompact space  and let  ${\mathcal P}$ be a closed hereditary topological property of compact spaces  which is
preserved under finite sums of subspaces and coincides with $\sigma$-compactness in the realm of locally compact paracompact spaces. For a
$Y\in{\mathscr E}(X)$ the following are equivalent:
\begin{itemize}
\item[\rm(1)] $Y\in {\mathscr E}^{\,C}_{\,{\mathcal P}}(X)$.
\item[\rm(2)] $\Theta_X(Y)\in {\mathscr Z}(X^*)$ and $\beta X\backslash\lambda_{{\mathcal P}} X\subseteq\Theta_X(Y)$.
\end{itemize}
Thus, in particular
\[\Theta_X\big({\mathscr E}^{\,C}_{\,{\mathcal P}}(X)\big)=\big\{Z\in{\mathscr Z}(X^*): \beta X\backslash\lambda_{{\mathcal P}} X\subseteq Z\big\}\backslash\{\emptyset\}.\]
\end{lemma}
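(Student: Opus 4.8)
The plan is to prove the equivalence (1)$\Leftrightarrow$(2) and then read off the displayed equality from the anti-order-isomorphism $\Theta_X$ of Lemma \ref{JHV}. Throughout write $Y=X\cup\{p\}$ and $C=\Theta_X(Y)=\tau^{-1}_Y(p)$. The \v{C}ech-completeness half of the equivalence is already isolated: by Lemma \ref{HDJYYH}, $Y$ is \v{C}ech-complete if and only if $C\in{\mathscr Z}(X^*)$, which accounts for the zero-set clause of (2). Moreover, since $X$ is locally compact paracompact, Lemma \ref{GFFYH} lets me replace $\lambda_{{\mathcal P}}X$ by $\sigma X$, so $\beta X\backslash\lambda_{{\mathcal P}}X\subseteq C$ becomes $\beta X\backslash\sigma X\subseteq C$. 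Hence, granting the zero-set reduction, the real content is the equivalence of ``$Y$ has ${\mathcal P}$'' with ``$\beta X\backslash\sigma X\subseteq C$''.

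For (1)$\Rightarrow$(2), assume $Y$ has ${\mathcal P}$. Taking $D=Y\in Coz(Y)$ with $\mbox{cl}_YD=Y$ having ${\mathcal P}$ shows $\lambda_{{\mathcal P}}Y=\beta Y$. I argue the containment contrapositively: suppose $z\in(\beta X\backslash\sigma X)\backslash C$. Then $z\in X^*\backslash C$ (as $X\subseteq\sigma X$), so $\tau_Y(z)\in Y^*$ (because $\tau_Y(X^*)=\beta Y\backslash X$ and $\tau_Y(z)\neq p$), whence $\tau_Y(z)\in\beta Y=\lambda_{{\mathcal P}}Y$ and there is $D\in Coz(Y)$ with $\mbox{cl}_YD$ having ${\mathcal P}$ and $\tau_Y(z)\in\mbox{int}_{\beta Y}\mbox{cl}_{\beta Y}D$. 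Pulling back, $O=\tau^{-1}_Y(\mbox{int}_{\beta Y}\mbox{cl}_{\beta Y}D)$ is an open neighborhood of $z$, and $E=D\cap X\in Coz(X)$ satisfies $\mbox{cl}_XE\subseteq\mbox{cl}_YD$, so $\mbox{cl}_XE$ has ${\mathcal P}$ (here I use that ${\mathcal P}$ is closed hereditary). A routine check gives $O\cap X\subseteq\mbox{cl}_XE$, whence $O\subseteq\mbox{cl}_{\beta X}(O\cap X)\subseteq\mbox{cl}_{\beta X}E$ and therefore $z\in\mbox{int}_{\beta X}\mbox{cl}_{\beta X}E\subseteq\lambda_{{\mathcal P}}X=\sigma X$, contradicting $z\notin\sigma X$.

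For (2)$\Rightarrow$(1), assume $C=Z(g)$ for a continuous $g:X^*\to\mathbf{I}$ and $\beta X\backslash\sigma X\subseteq C$; I must produce ${\mathcal P}$. The zero-set hypothesis writes $Y^*=X^*\backslash C=\bigcup_{n}g^{-1}([1/n,1])$ as a countable union of compacta, and the containment gives $Y^*\subseteq\sigma X$. Assume the notation of Notation \ref{FTFE}. Since $\sigma X=\bigcup\{\mbox{cl}_{\beta X}X_J:J\subseteq I\mbox{ countable}\}$ is a directed union of clopen sets, each compact piece $g^{-1}([1/n,1])$ lies in a single $\mbox{cl}_{\beta X}X_{J_n}$ with $J_n$ countable; putting $J^*=\bigcup_nJ_n$ (countable) gives $Y^*\subseteq\mbox{cl}_{\beta X}X_{J^*}$. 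Splitting along the clopen partition $\beta X=\mbox{cl}_{\beta X}X_{J^*}\sqcup\mbox{cl}_{\beta X}X_{I\backslash J^*}$, the whole outgrowth $(X_{I\backslash J^*})^*$ is disjoint from $Y^*$, hence contained in $C$; consequently $A_1=\tau_Y(\mbox{cl}_{\beta X}X_{I\backslash J^*})=X_{I\backslash J^*}\cup\{p\}$ is a continuous image of a compactum, so $A_1$ is compact and has ${\mathcal P}$ (compact spaces have ${\mathcal P}$). On the other hand $X_{J^*}$, being a clopen subspace of $X$, is locally compact and paracompact, and it is $\sigma$-compact because $J^*$ is countable, so $X_{J^*}$ has ${\mathcal P}$ by the coincidence hypothesis. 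Since $Y=A_1\cup X_{J^*}$ is the union of two of its ${\mathcal P}$-subspaces and ${\mathcal P}$ is preserved under finite sums of subspaces, $Y$ has ${\mathcal P}$; with Lemma \ref{HDJYYH} this yields $Y\in{\mathscr E}^{\,C}_{\,{\mathcal P}}(X)$.

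Finally, the displayed equality follows at once from the equivalence just proved together with the surjectivity of $\Theta_X$ in Lemma \ref{JHV}: $\Theta_X(Y)$ runs exactly over the nonempty zero-sets of $X^*$ containing $\beta X\backslash\lambda_{{\mathcal P}}X$, each such set being nonempty and compact and hence of the form $\Theta_X(Y)$. I expect the crux of the whole argument to be the (2)$\Rightarrow$(1) direction, and specifically the point where \v{C}ech-completeness is genuinely used: it is the $\sigma$-compactness of $Y^*$ (equivalently, $C$ being a zero-set) that forces $Y^*$ into the countably many summands indexed by $J^*$ and thereby collapses the complementary clopen block to the single point $p$, turning $Y$ into a finite union of a compactum and a $\sigma$-compact locally compact paracompact clopen piece. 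Without the zero-set clause this concentration fails, and $Y$ need not have ${\mathcal P}$.
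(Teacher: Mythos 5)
Your (2)$\Rightarrow$(1) direction is correct and is essentially the paper's argument in a slightly different dress: the paper extends $\Theta_X(Y)$ to a zero-set $Z$ of $\beta X$ and writes $Y=\tau_Y(Z)\cup X_J$, while you split $\beta X$ along the clopen set $\mbox{cl}_{\beta X}X_{J^*}$ and write $Y=\tau_Y(\mbox{cl}_{\beta X}X_{I\backslash J^*})\cup X_{J^*}$; either way one obtains a compactum plus a $\sigma$-compact clopen piece and invokes preservation under finite sums. The gap is in (1)$\Rightarrow$(2).

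There you take $D\in Coz(Y)$ with $\mbox{cl}_YD$ having ${\mathcal P}$, set $E=D\cap X$, and assert that $\mbox{cl}_XE$ has ${\mathcal P}$ because $\mbox{cl}_XE\subseteq\mbox{cl}_YD$ and ${\mathcal P}$ is closed hereditary. Closed hereditariness only transfers ${\mathcal P}$ to subsets that are \emph{closed} in $\mbox{cl}_YD$, whereas $\mbox{cl}_XE=X\cap\mbox{cl}_YD$ is the trace of the \emph{open} set $X$ on $\mbox{cl}_YD$: whenever $p\in\mbox{cl}_YD$ one has $\mbox{cl}_XE=\mbox{cl}_YD\backslash\{p\}$, which is not closed in $\mbox{cl}_YD$ (nor in $Y$), since $p$ is not isolated there. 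This is not a marginal case. Because $Y$ has ${\mathcal P}$, the witness $D=Y$ is admissible (it is a cozero-set, $\mbox{cl}_YY=Y$ has ${\mathcal P}$, and $\mbox{int}_{\beta Y}\mbox{cl}_{\beta Y}Y=\beta Y$ contains $\tau_Y(z)$), and for this $D$ your claim reads ``$X$ has ${\mathcal P}$,'' i.e.\ ``$X$ is $\sigma$-compact'' --- false for, say, $Y=\omega X$ with $X$ an uncountable discrete sum of real lines, even though the lemma's conclusion does hold there. The paper avoids exactly this trap: it picks $t\in\beta X\backslash\sigma X$ with $t\notin\tau_Y^{-1}(p)$, separates $t$ from $\tau_Y^{-1}(p)$ by an $f$ on $\beta X$ with $f(\tau_Y^{-1}(p))=\{1\}$, and works with $T=X\cap f^{-1}([0,1/2])=Y\cap\tau_Y(f^{-1}([0,1/2]))$, the trace on $Y$ of a compact subset of $\beta Y$ that \emph{misses} $p$ and is therefore genuinely closed in $Y$; only then does closed hereditariness give $T$ the property ${\mathcal P}$, hence $\sigma$-compactness, hence $t\in\mbox{cl}_{\beta X}T\subseteq\sigma X$, a contradiction. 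Your argument can be repaired in the same spirit (shrink $D$ by intersecting with the trace on $Y$ of a cozero-set of $\beta Y$ separating $\tau_Y(z)$ from $p$, so that the relevant closure avoids $p$), but as written the step is a genuine gap.
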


\begin{proof}
Let $Y=X\cup\{p\}$. (1) {\em  implies} (2).  Suppose that (1) holds. By Lemma \ref{HDJYYH} we have  $\tau^{-1}_Y(p)\in {\mathscr Z}(X^*)$. Note
that by Lemma  \ref{GFFYH} we have  $\lambda_{{\mathcal P}} X=\sigma X$. Let $t\in \beta X\backslash\sigma X$ and suppose to the contrary that
$t\notin \tau^{-1}_Y(p)$.  Let $f:\beta X\rightarrow \mathbf{I}$ be continuous and such that $f(t)=0$ and $f(\tau^{-1}_Y(p))=\{1\}$. Since
$\tau_Y(f^{-1}([0,1/2]))$ is compact, the set
\[T=X\cap f^{-1}\big([0,1/2]\big)=Y\cap\tau_Y\big(f^{-1}\big([0,1/2]\big)\big)\]
being closed in $Y$, has ${\mathcal P}$. But $T$, being closed in $X$, is locally compact and paracompact, and thus, having  ${\mathcal P}$, it is
$\sigma$-compact. Therefore, by the definition of $\sigma X$ we have $\mbox{cl}_{\beta X}T\subseteq\sigma X$. But since
\begin{eqnarray*}
t\in f^{-1}\big([0,1/2)\big)&\subseteq&\mbox{cl}_{\beta X}f^{-1}\big([0,1/2)\big)\\&=&\mbox{cl}_{\beta X}\big(X\cap f^{-1}\big([0,1/2)\big)\big)\\&\subseteq&\mbox{cl}_{\beta X}\big(X\cap f^{-1}\big([0,1/2]\big)\big)=\mbox{cl}_{\beta X}T
\end{eqnarray*}
we have $t\in \sigma X$, which contradicts  the choice of $t$.  Thus $t\in \tau^{-1}_Y(p)$ and therefore $\beta X\backslash\sigma X\subseteq\tau^{-1}_Y(p)$.

(2) {\em  implies} (1).  Suppose that (2) holds. Note that since $X$ is locally compact, the set $X^*$ is closed in
(the normal space) $\beta X$ and thus, since $\tau^{-1}_Y(p)\in {\mathscr Z}(X^*)$ (using the Tietze-Urysohn Theorem) we have
$\tau^{-1}_Y(p)=Z\cap X^*$ for some $Z\in {\mathscr Z}(\beta X)$.  Note that by Lemma  \ref{GFFYH} we have $\lambda_{{\mathcal P}} X=\sigma X$.
Now, since $\beta X\backslash\sigma X\subseteq\tau^{-1}_Y(p)\subseteq Z$ we have $\beta X\backslash Z\subseteq\sigma X$. Therefore, assuming the notation of
Notation  \ref{FTFE} (since $\beta X\backslash Z$, being a cozero-set in $\beta X$, is  $\sigma$-compact)  we have
\[\beta X\backslash Z\subseteq\bigcup_{n=1}^\infty\mbox{cl}_{\beta X}X_{J_n}\subseteq\mbox{cl}_{\beta X}X_J\]
where $J_1,J_2,\ldots\subseteq I$ are countable and $J=J_1\cup J_2\cup\cdots$.
But
\[Y=\tau_Y(Z)\cup(X\backslash Z)\subseteq \tau_Y(Z)\cup X_J\]
and thus we have
\begin{equation}\label{HFUY}
Y=\tau_Y(Z)\cup X_J.
\end{equation}
Now, since  $X_J$ has ${\mathcal P}$, as it is $\sigma$-compact (and  being closed in $X$, it is locally compact and  paracompact)
and $\tau_Y(Z)$ has  ${\mathcal P}$, as it is compact, from   (\ref{HFUY}) it follows that the space $Y$, being a finite union of its
${\mathcal P}$-subspaces, has  ${\mathcal P}$. The fact that $Y$ is \v{C}ech-complete follows from Lemma \ref{HDJYYH}.
\end{proof}

The following generalizes Lemma 3.18 of \cite{Ko2}.

\begin{lemma}\label{HDGF}
Let $X$ be a locally compact paracompact space  and let  ${\mathcal P}$ be a closed hereditary topological property of compact spaces
which is preserved under finite sums of subspaces and coincides with $\sigma$-compactness in the realm of locally compact paracompact spaces.
For a $Y\in {\mathscr E}(X)$ the following are equivalent:
\begin{itemize}
\item[\rm(1)] $Y\in {\mathscr E}^{\,*}_{\,local-{\mathcal P}}(X)$.
\item[\rm(2)] $\Theta_X(Y)\in {\mathscr Z}(\beta X)$ and $\Theta_X(Y)\subseteq\lambda_{{\mathcal P}} X$.
\end{itemize}
Thus, in particular
\[\Theta_X\big({\mathscr E}^{\,*}_{\,local-{\mathcal P}}(X)\big)=\big\{Z\in{\mathscr Z}(\beta X): Z\subseteq\lambda_{{\mathcal P}} X\backslash X\big\}\backslash\{\emptyset\}.\]
\end{lemma}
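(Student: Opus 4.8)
The plan is to first strip the statement down to its genuinely new content. Because ${\mathcal P}$ is preserved under finite sums and coincides with $\sigma$-compactness on locally compact paracompact spaces, Lemma \ref{GFFYH} gives $\lambda_{{\mathcal P}} X=\sigma X$, so throughout I would work with $\sigma X$ in place of $\lambda_{{\mathcal P}} X$. Moreover, by Lemma \ref{JYYH} the condition $\Theta_X(Y)\in{\mathscr Z}(\beta X)$ in (2) is \emph{exactly} the assertion that $Y$ is first-countable at its added point, so writing $Y=X\cup\{p\}$ and $\Theta_X(Y)=\tau^{-1}_Y(p)=Z(f)$ with $f\colon\beta X\to\mathbf{I}$ continuous, the whole problem reduces to: \emph{granting that $Y$ is first-countable at $p$, show $Y$ is locally-${\mathcal P}$ iff $Z(f)\subseteq\sigma X$.} Local-${\mathcal P}$-ness of $Y$ at points of $X$ is automatic, since $X$ is open in $Y$ and locally compact and compact spaces have ${\mathcal P}$; hence only the point $p$ is at issue.

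For (1)$\Rightarrow$(2) I would use the explicit base $V_n=\{p\}\cup(X\cap f^{-1}([0,1/n)))$ at $p$ produced in the proof of Lemma \ref{JYYH}. Local-${\mathcal P}$-ness at $p$ gives a neighborhood $U\ni p$ with $\mbox{cl}_Y U$ having ${\mathcal P}$; choosing $m$ with $V_m\subseteq U$, the set $\mbox{cl}_Y V_m$ is closed in $\mbox{cl}_Y U$ and hence, by closed-heredity of ${\mathcal P}$, has ${\mathcal P}$. Put $F=\mbox{cl}_Y V_m\cap X$, a closed subset of $X$, hence locally compact and paracompact. Since $f$ is constant on $\tau^{-1}_Y(p)$ it factors as $f=\bar f\tau_Y$ through the quotient map $\tau_Y$ (Lemma \ref{JFV}), with $\bar f\colon\beta Y\to\mathbf{I}$ continuous, $\bar f(p)=0$, and $\bar f>0$ on $F$ because $Z(f)\subseteq X^*$; therefore
\[F=\bigcup_{n=1}^\infty F_n,\qquad F_n=\{t\in F:\bar f(t)\geq 1/n\}.\]
Each $F_n$ is closed in $\mbox{cl}_Y V_m=F\cup\{p\}$ (it misses $p$), so it has ${\mathcal P}$, and being closed in $X$ it is locally compact and paracompact, hence $\sigma$-compact; thus $F$ is $\sigma$-compact. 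Finally $V_m\cap X\subseteq F$ and $X\cap f^{-1}([0,1/m))$ is dense in the open set $f^{-1}([0,1/m))\supseteq Z(f)$, so $Z(f)\subseteq\mbox{cl}_{\beta X}F\subseteq\sigma X$ by the definition of $\sigma X$. I expect this decomposition to be the crux, and first-countability is indispensable: without it the one-point compactification of an uncountable discrete space has ${\mathcal P}$ while its remainder is not contained in $\sigma X$.

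For (2)$\Rightarrow$(1) the task is to manufacture a ${\mathcal P}$-neighborhood of $p$. Adopting Notation \ref{FTFE} and Remark \ref{FTDE}, $\sigma X$ is the union of the \emph{clopen} sets $\mbox{cl}_{\beta X}X_J$ over countable $J\subseteq I$; as $Z(f)$ is compact and contained in $\sigma X$, finitely many of these cover it, giving $Z(f)\subseteq\mbox{cl}_{\beta X}X_{J'}$ for one countable $J'\subseteq I$. Since $X_{J'}$ is clopen in $X$, the set $U=\{p\}\cup X_{J'}=Y\cap\tau_Y(\mbox{cl}_{\beta X}X_{J'})$ is clopen in $Y$ (the clopen $\mbox{cl}_{\beta X}X_{J'}$ is $\tau_Y$-saturated as it contains $Z(f)$), so $\mbox{cl}_Y U=U$ and it suffices that $U$ have ${\mathcal P}$. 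Here I would \emph{avoid} trying to cut $U$ into finitely many pieces by hand, and instead recognize $U$ as the one-point extension of the $\sigma$-compact locally compact paracompact space $X_{J'}$ obtained by collapsing $Z(f)\subseteq X_{J'}^*$, so that $\Theta_{X_{J'}}(U)=Z(f)\in{\mathscr Z}(X_{J'}^*)$. Because $X_{J'}$ is $\sigma$-compact we have $\lambda_{{\mathcal P}} X_{J'}=\sigma X_{J'}=\beta X_{J'}$, whence the containment requirement $\beta X_{J'}\backslash\lambda_{{\mathcal P}} X_{J'}\subseteq\Theta_{X_{J'}}(U)$ of Lemma \ref{HJGHG} is vacuous; Lemma \ref{HJGHG} applied to $X_{J'}$ therefore yields $U\in{\mathscr E}^{\,C}_{\,{\mathcal P}}(X_{J'})$, so in particular $U$ has ${\mathcal P}$. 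This exhibits the required ${\mathcal P}$-neighborhood of $p$ and proves $Y\in{\mathscr E}^{\,*}_{\,local-{\mathcal P}}(X)$.

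Finally, the displayed description of $\Theta_X({\mathscr E}^{\,*}_{\,local-{\mathcal P}}(X))$ follows immediately from the equivalence: each $\Theta_X(Y)$ is a compact subset of $X^*=\beta X\backslash X$, so the condition $\Theta_X(Y)\subseteq\lambda_{{\mathcal P}} X$ is the same as $\Theta_X(Y)\subseteq\lambda_{{\mathcal P}} X\backslash X$; conversely every nonempty $Z\in{\mathscr Z}(\beta X)$ with $Z\subseteq\lambda_{{\mathcal P}} X\backslash X$ is a nonempty compact subset of $X^*$, hence equals $\Theta_X(Y)$ for some $Y\in{\mathscr E}(X)$ by Lemma \ref{JHV}, and that $Y$ lies in ${\mathscr E}^{\,*}_{\,local-{\mathcal P}}(X)$ by the equivalence just established.
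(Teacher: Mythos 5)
Your proposal is correct, and it splits naturally into a half that mirrors the paper and a half that does not. For (1)$\Rightarrow$(2) your argument is essentially the paper's: the paper decomposes $X\cap f^{-1}([0,1/k])$ into the annuli $X\cap\bigl(f^{-1}([0,1/n])\backslash f^{-1}([0,1/(n+1)))\bigr)$, each closed in $\mbox{cl}_YV$ and missing $p$, while you decompose $F=\mbox{cl}_YV_m\cap X$ into the level sets $F\cap\bar f^{-1}([1/n,1])$; these are cosmetic variants of the same idea (closed-heredity gives ${\mathcal P}$ on each piece, the coincidence with $\sigma$-compactness on closed subspaces of $X$ gives $\sigma$-compactness, and density of $X$ pushes $Z(f)$ into $\mbox{cl}_{\beta X}F\subseteq\sigma X$). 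For (2)$\Rightarrow$(1) you genuinely diverge: the paper takes $V=Y\cap\tau_Y(g^{-1}([0,1/k)))$ and exhibits $\mbox{cl}_YV$ inside $(X\cap g^{-1}([0,1/k]))\cup\{p\}$, a finite union of ${\mathcal P}$-subspaces, invoking only the finite-sum hypothesis directly; you instead take the clopen neighborhood $U=\{p\}\cup X_{J'}$, identify it as the one-point extension of the $\sigma$-compact summand $X_{J'}$ with $\Theta_{X_{J'}}(U)=Z(f)\in{\mathscr Z}(X_{J'}^*)$, and quote Lemma \ref{HJGHG} (whose containment hypothesis is vacuous since $\lambda_{\mathcal P}X_{J'}=\beta X_{J'}$). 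This is legitimate and non-circular, since Lemma \ref{HJGHG} precedes the present lemma; what it buys is that you never redo the finite-union computation, at the price of the bookkeeping needed to justify the identification $\Theta_{X_{J'}}(U)=Z(f)$ (saturation of $\mbox{cl}_{\beta X}X_{J'}$ under $\tau_Y$, the identification $\beta X_{J'}\cong\mbox{cl}_{\beta X}X_{J'}$ from Remark \ref{FTDE}, and Lemma \ref{KFH}); that bookkeeping is correct but is the one place a referee would ask you to spell out a line more. Your side remark that first-countability is indispensable, witnessed by the one-point compactification of an uncountable discrete space, is also correct and is a useful sanity check the paper does not state.
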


\begin{proof}
Let $Y=X\cup\{p\}$. (1) {\em  implies} (2).  Suppose that (1) holds. Since $Y\in {\mathscr E}^{\,*}(X)$, by Lemma \ref{JYYH} we have
$\tau^{-1}_Y(p)\in {\mathscr Z}(\beta X)$. Let  $\tau^{-1}_Y(p)=Z(f)$ for some continuous $f:\beta X\rightarrow\mathbf{I}$.
Since $Y$ is locally-${\mathcal P}$, there exists an open neighborhood $V$ of $p$ in $Y$ such that  $\mbox{cl}_Y V$ has  ${\mathcal P}$.
Let $V'$ be an open subset of $\beta Y$ such that  $Y\cap V'=V$. Then $p\in V'$, and thus since
\[\bigcap_{n=1}^\infty f^{-1}\big([0,1/n]\big)=Z(f)=\tau^{-1}_Y(p)\subseteq\tau^{-1}_Y(V')\]
by compactness, we have $f^{-1}([0,1/k])\subseteq\tau^{-1}_Y(V')$ for some $k\in\mathbf{N}$. Now, for each $n\geq k$, since
\begin{eqnarray*}
&&Y\cap\tau_Y\big(f^{-1}\big([0,1/n]\big)\backslash f^{-1}\big(\big[0,1/(n+1)\big)\big)\big)\\&\subseteq& Y\cap\tau_Y\big(f^{-1}\big([0,1/k]\big)\big)\subseteq Y\cap\tau_Y\big(\tau_Y^{-1}(V')\big)\subseteq Y\cap V'=V\subseteq\mbox{cl}_Y V
\end{eqnarray*}
the set
\begin{eqnarray*}
K_n&=&X\cap\big(f^{-1}\big([0,1/n]\big)\backslash f^{-1}\big(\big[0,1/(n+1)\big)\big)\big)\\&=&Y\cap\tau_Y\big(f^{-1}\big([0,1/n]\big)\backslash f^{-1}\big(\big[0,1/(n+1)\big)\big)\big)
\end{eqnarray*}
being closed in $\mbox{cl}_Y V$, has ${\mathcal P}$, and therefore (since being closed in $X$ it is locally compact and paracompact) it is
$\sigma$-compact. (It might be helpful to recall that by Lemma \ref{JFV} the space $\beta Y$ is obtained from $\beta X$ by contracting
$\tau^{-1}_Y(p)$ to $p$ with $\tau_Y$ as its quotient mapping.) Thus, the set
\[X\cap f^{-1}\big([0,1/k]\big)=\bigcup_{n=k}^\infty K_n\]
is  $\sigma$-compact, and therefore, by the definition of  $\sigma X$, we have
\[\mbox{cl}_{\beta X}\big(X\cap f^{-1}\big([0,1/k]\big)\big)\subseteq\sigma X.\]
But
\begin{eqnarray*}
Z(f)\subseteq f^{-1}\big([0,1/k)\big)&\subseteq&\mbox{cl}_{\beta X} f^{-1}\big([0,1/k)\big)\\&=&\mbox{cl}_{\beta X}\big(X\cap f^{-1}\big([0,1/k)\big)\big)\\&\subseteq&\mbox{cl}_{\beta X}\big(X\cap f^{-1}\big([0,1/k]\big)\big)
\end{eqnarray*}
from which it follows that  $\tau^{-1}_Y(p)\subseteq \sigma X$. Finally, note that by Lemma \ref{GFFYH} we have $\lambda_{{\mathcal P}} X=\sigma X$.

(2) {\em  implies} (1).  Suppose that (2) holds. By Lemma \ref{JYYH} we have $Y\in {\mathscr E}^{\,*}(X)$. Therefore, it suffices to verify that $Y$ is
locally-${\mathcal P}$. Also, since by assumption $X$ is  locally compact, it is  locally-${\mathcal P}$, as ${\mathcal P}$ is assumed to be a topological property of compact spaces. Thus, we need only to verify that $p$ has  an open neighborhood in $Y$ whose closure in $Y$ has  ${\mathcal P}$. Let $g:\beta X\rightarrow\mathbf{I}$ be continuous
and such that $Z(g)=\tau^{-1}_Y(p)$. Then since
\[\bigcap_{n=1}^\infty g^{-1}\big([0,1/n]\big)=Z(g)\subseteq \lambda_{{\mathcal P}} X\]
by compactness (and since $\lambda_{{\mathcal P}} X$ is open in $\beta X$) we have $g^{-1}([0,1/k])\subseteq \lambda_{{\mathcal P}} X$ for
some $k\in \mathbf{N}$. Note that  by Lemma  \ref{GFFYH} we have $\lambda_{{\mathcal P}} X=\sigma X$. Assume  the notation of Notation  \ref{FTFE}.
By compactness, we have
\[g^{-1}\big([0,1/k]\big)\subseteq\mbox{cl}_{\beta X}X_{J_1}\cup\cdots\cup\mbox{cl}_{\beta X}X_{J_n}=\mbox{cl}_{\beta X}X_J\]
where $n\in \mathbf{N}$, the sets  $J_1,\ldots, J_n\subseteq I$ are countable and $J=J_1\cup\cdots\cup J_n$. The set $X\cap g^{-1}([0,1/k])\subseteq X_J$, being closed in the latter ($\sigma$-compact space) is $\sigma$-compact, and therefore (since being closed in $X$, it is locally compact and paracompact)
it has  ${\mathcal P}$. Let
\[V=Y\cap \tau_Y\big(g^{-1}\big([0,1/k)\big)\big).\]
Then $V$ is an open neighborhood of $p$ in $Y$. We show that  $\mbox{cl}_Y V$ has  ${\mathcal P}$. But this follows, since
\begin{eqnarray*}
\mbox{cl}_Y V\subseteq Y\cap\tau_Y\big(g^{-1}\big([0,1/k]\big)\big)&=&\big(X\cap\tau_Y\big(g^{-1}\big([0,1/k]\big)\big)\big)\cup\{p\}\\&=&\big(X\cap g^{-1}\big([0,1/k]\big)\big)\cup\{p\}
\end{eqnarray*}
and the latter, being a finite union of its ${\mathcal P}$-subspaces (note that the singleton $\{p\}$, being compact, has ${\mathcal P}$) has
${\mathcal P}$, and  thus, its closed subset $\mbox{cl}_Y V$, also has ${\mathcal P}$.
\end{proof}

Lemmas \ref{KHSA}--\ref{KHJGD} are from \cite{Ko1}.

\begin{lemma}\label{KHSA}
Let $X$ be a locally compact paracompact space. If $Z\in{\mathscr Z}(\beta X)$ in non-empty then $Z\cap\sigma X\neq\emptyset$
\end{lemma}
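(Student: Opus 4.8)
The plan is to exhibit an explicit point of $Z\cap\sigma X$ by approximating $Z$ from within the dense subspace $X$ and then extracting a cluster point that is trapped inside the closure of a suitable $\sigma$-compact set. First I would dispose of the trivial case: if $X$ is compact then $X=\beta X$ is itself $\sigma$-compact, so $\sigma X=\mbox{cl}_{\beta X}X=\beta X\supseteq Z$, and there is nothing to prove. So assume $X$ is non-compact and write $Z=Z(f)$ for a continuous $f:\beta X\rightarrow\mathbf{I}$.

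Next, for each $n\in\mathbf{N}$ the set $f^{-1}([0,1/n))$ is open in $\beta X$ and non-empty, since it contains the non-empty set $Z$. As $X$ is dense in $\beta X$, it meets every non-empty open set, so I can choose $x_n\in X$ with $f(x_n)<1/n$. The countable set $H=\{x_n:n\in\mathbf{N}\}\subseteq X$ is $\sigma$-compact, being a countable union of singletons, and therefore by the very definition of $\sigma X$ (Definition \ref{KJH}) its closure satisfies $\mbox{cl}_{\beta X}H\subseteq\sigma X$.

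Then I would pass to a cluster point inside the compact space $\mbox{cl}_{\beta X}H$. The tails $A_n=\mbox{cl}_{\beta X}\{x_m:m\geq n\}$ form a decreasing family of non-empty closed subsets of this compact set, so by the finite intersection property there is a point $t\in\bigcap_{n}A_n$, which is a cluster point of the sequence $(x_n)$. By construction $t\in\mbox{cl}_{\beta X}H\subseteq\sigma X$. It remains to verify $f(t)=0$: if instead $f(t)=c>0$, then $f^{-1}((c/2,1])$ would be an open neighborhood of $t$ containing $x_n$ for infinitely many $n$, contradicting $f(x_n)<1/n\to 0$. Hence $t\in Z(f)\cap\sigma X$, and the intersection is non-empty.

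The one delicate point, and really the crux of the argument, is this last passage to a cluster point: $\beta X$ is in general neither first-countable nor sequentially compact, so one cannot hope to extract a genuinely convergent subsequence of $(x_n)$. What rescues the argument is that a cluster point always exists in a compact space, and the continuity of $f$ forces that cluster point onto $Z$; this is all the proof requires. I note in passing that local compactness and paracompactness are never invoked, so the statement in fact holds for every Tychonoff space, the hypotheses here merely matching the ambient setting of the surrounding lemmas.
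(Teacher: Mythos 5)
Your proof is correct, and it takes a genuinely different route from the paper's. The paper argues globally: using the clopen decomposition $X=\bigoplus_{i\in I}X_i$ into $\sigma$-compact summands, it shows that $\sigma X$ is countably compact, hence pseudocompact, so that $\upsilon(\sigma X)=\beta(\sigma X)=\beta X$, and then quotes the standard fact that a non-empty zero-set of $\upsilon T$ must meet $T$. You instead produce a point of $Z\cap\sigma X$ by hand: density of $X$ in $\beta X$ gives $x_n\in X$ with $f(x_n)<1/n$, the countable set $H=\{x_n:n\in\mathbf{N}\}$ is $\sigma$-compact so $\mbox{cl}_{\beta X}H\subseteq\sigma X$ by Definition \ref{KJH}, and any point of $\bigcap_{n}\mbox{cl}_{\beta X}\{x_m:m\geq n\}$ (non-empty by compactness) is forced onto $Z(f)$ by continuity of $f$. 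Each step checks out, including the delicate one: you correctly use only the existence of a cluster point in the compact set $\mbox{cl}_{\beta X}H$, never a convergent subsequence. Your closing observation is also accurate: nothing in your argument invokes local compactness or paracompactness, so the lemma holds for every Tychonoff space; in fact even the paper's countable-compactness step could be run without the decomposition, since any sequence in $\sigma X$ already lies in $\mbox{cl}_{\beta X}H$ for a single $\sigma$-compact $H\subseteq X$. What the paper's approach buys is the reusable structural fact that $\sigma X$ is pseudocompact with $\upsilon(\sigma X)=\beta X$, which situates the lemma within the standard realcompactification machinery; what yours buys is a self-contained, more elementary argument of strictly greater generality.
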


\begin{proof}
Let $\{x_n\}_{n=1}^\infty$ be a sequence in $\sigma X$. Assume the notation of  Notation  \ref{FTFE}.
Then $\{x_n:n\in \mathbf{N}\}\subseteq\mbox{cl}_{\beta X}X_J$ for some countable $J\subseteq I$. Therefore
$\{x_n:n\in \mathbf{N}\}$ has a limit point in $\mbox{cl}_{\beta X}X_J\subseteq\sigma X$. Thus $\sigma X$ is countably compact,
and therefore is pseudocompact, and $\upsilon(\sigma X)=\beta(\sigma X)=\beta X$ (note that the latter equality holds, as $X\subseteq\sigma X\subseteq\beta X$). The result now follows, as for any Tychonoff space $T$, any non-empty
zero-set of $\upsilon T$ meets $T$  (see Lemma 5.11 (f) of \cite{PW}).
\end{proof}

\begin{lemma}\label{JGRD}
Let $X$ be a locally compact paracompact space. If $Z\in  {\mathscr Z}(X^*)$ is non-empty then $Z\cap\sigma X\neq\emptyset$.
\end{lemma}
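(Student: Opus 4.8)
The plan is to reduce the claim to Lemma \ref{KHSA} in spirit, but since a zero-set of the compact outgrowth $X^*$ need not be (or contain) a zero-set of $\beta X$, I will instead build a point of $Z$ lying in $\sigma X$ by a direct sequential construction using the summand decomposition of Notation \ref{FTFE}. First I would write $Z=Z(g)$ for a continuous $g:X^*\rightarrow\mathbf{I}$. Since $X$ is locally compact, $X^*$ is compact and closed in the normal space $\beta X$, so by the Tietze--Urysohn theorem $g$ extends to a continuous $G:\beta X\rightarrow\mathbf{I}$ with $G|X^*=g$; note that then $Z(G)\cap X^*=Z$. Fix $z\in Z$ and set $V_n=G^{-1}([0,1/n))$, a decreasing sequence of open neighborhoods of $z$ in $\beta X$. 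Also fix the representation $X=\bigoplus_{i\in I}X_i$ of Notation \ref{FTFE}, recalling that each $X_i$ is $\sigma$-compact and clopen in $X$, hence (as $X$ is open in $\beta X$) open in $\beta X$.

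The argument then splits according to a dichotomy on how the $V_n$ meet the summands. If for some $n$ the set $V_n\cap X$ meets only finitely many summands, say $V_n\cap X\subseteq X_F$ with $F\subseteq I$ finite, then since $X$ is dense in $\beta X$ and $V_n$ is open, $z\in\mbox{cl}_{\beta X}(V_n\cap X)\subseteq\mbox{cl}_{\beta X}X_F$; as $X_F$ is a finite union of $\sigma$-compact sets it is $\sigma$-compact, whence $\mbox{cl}_{\beta X}X_F\subseteq\sigma X$ by Definition \ref{KJH}, and so $z\in Z\cap\sigma X$, finishing this case. Otherwise every $V_n\cap X$ meets infinitely many summands, and I would greedily pick indices $i_n\in I$ pairwise distinct together with points $x_n\in V_n\cap X_{i_n}$: at stage $n$ only finitely many indices $i_1,\dots,i_{n-1}$ are forbidden, and $V_n\cap X$ meets infinitely many summands, so a fresh $i_n$ and a point $x_n$ are available.

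Finally I would extract a cluster point. Put $J=\{i_n:n\in\mathbf{N}\}$, which is countable, so $\mbox{cl}_{\beta X}X_J\subseteq\sigma X$ by Remark \ref{FTDE}. By compactness of $\beta X$ the sequence $(x_n)$ has a cluster point $w$, and $w\in\mbox{cl}_{\beta X}\{x_n:n\in\mathbf{N}\}\subseteq\mbox{cl}_{\beta X}X_J\subseteq\sigma X$. The crux is that $w\notin X$: indeed each $X_i$ is open in $\beta X$ and, the indices $i_n$ being distinct, contains at most one $x_n$, so no point of $X$ can be a cluster point of $(x_n)$; hence $w\in X^*$. Moreover, since the $V_m$ decrease, the tail $\{x_n:n\ge m\}$ lies in $V_m$, so $w\in\mbox{cl}_{\beta X}V_m\subseteq G^{-1}([0,1/m])$ for every $m$, giving $w\in\bigcap_{m}G^{-1}([0,1/m])=Z(G)$. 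Therefore $w\in Z(G)\cap X^*=Z$ and $w\in\sigma X$, as required. I expect the main obstacle to be precisely this last point: a naive choice of $x_n\in V_n\cap X$ could cluster inside $X$ (landing in $Z(G)\cap X$ rather than in $Z$), and it is the distinct-summand selection that forces every cluster point into the outgrowth.
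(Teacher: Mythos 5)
Your proof is correct, and it takes a genuinely different route from the paper's. Both arguments begin the same way, extending the zero-set over $\beta X$ by Tietze--Urysohn to get $Z=Z(G)\cap X^*$ with $G:\beta X\rightarrow\mathbf{I}$ continuous; but from there the paper reduces everything to Lemma \ref{KHSA}: it first applies that lemma to conclude $Z(G)\cap\sigma X\neq\emptyset$, and then rules out the bad case $Z(G)\cap(\sigma X\backslash X)=\emptyset$ by noting that then only finitely many summands $X_i$ can meet $Z(G)$, so that excising the (clopen) closure of their union produces a non-empty zero-set of $\beta X$ missing $\sigma X$ --- contradicting Lemma \ref{KHSA} again. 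Lemma \ref{KHSA} in turn rests on the countable compactness (hence pseudocompactness) of $\sigma X$ and the fact that non-empty zero-sets of $\upsilon T$ meet $T$. You bypass all of that realcompactification machinery with a self-contained construction: either some $G^{-1}([0,1/n))$ traces into only finitely many summands, which plants your chosen point of $Z$ directly in $\sigma X$, or you can select points $x_n\in G^{-1}([0,1/n))$ lying in pairwise distinct summands; the distinct-summand selection is exactly what expels every cluster point from $X$ (each $X_i$ is open in $\beta X$ and captures at most one term), while the nested neighborhoods force the cluster point into $Z(G)$, hence into $Z\cap\sigma X$. Your version is more elementary and arguably makes the paper's unproved assertion that ``$J$ is finite'' transparent (it is the contrapositive of your Case 2); the paper's version is shorter because it amortizes the work into Lemma \ref{KHSA}, which is reused elsewhere.
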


\begin{proof}
Let $S\in {\mathscr Z}(\beta X)$ be such that $S\cap X^*=Z$ (which exists, as $X^*$ is closed in (the normal space) $\beta X$, as $X$ is locally compact, and thus, by the Tietze-Urysohn Theorem, every continuous function from
$X^*$ to $\mathbf{I}$ is continuously extendible over $\beta X$). By Lemma \ref{KHSA} we have $S\cap\sigma X\neq\emptyset$. Suppose that
$S\cap (\sigma X\backslash X)=\emptyset$. Then $S\cap\sigma X=X\cap S$. Assume the notation of Notation  \ref{FTFE}. Let
$J=\{i\in I: X_i\cap S\neq\emptyset\}$. Then $J$ is finite. Note that since $X_J$ is clopen in $X$, it has a clopen  closure in
$\beta X$. Now
\[T=S\cap (\beta X\backslash\mbox{cl}_{\beta X}X_J)\in  {\mathscr Z}(\beta X)\]
misses $\sigma X$, and therefore, by Lemma \ref{KHSA}  we have $T=\emptyset$. But this is a contradiction, as $Z=S\cap(\beta X\backslash \sigma X)\subseteq T$.
This shows that
\[Z\cap(\sigma X\backslash X)=S\cap (\sigma X\backslash X)\neq\emptyset.\]
\end{proof}

\begin{lemma}\label{KHJGD}
Let $X$ be a locally compact paracompact space. For any $S,T\in{\mathscr Z}(X^*)$, if $S\cap\sigma X\subseteq T\cap\sigma X$ then
$S\subseteq T$.
\end{lemma}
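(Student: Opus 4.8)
The plan is to argue by contradiction, producing from any point of $S\backslash T$ a non-empty zero-set of $X^*$ which leads, via Lemma \ref{JGRD}, to an impossibility. Suppose $S\not\subseteq T$ and fix a point $x\in S\backslash T$. Write $T=Z(g)$ for some continuous $g:X^*\rightarrow\mathbf{I}$. Since $x\notin T=Z(g)$ we have $g(x)>0$, so $g(x)\geq 1/n$ for some $n\in\mathbf{N}$.

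The key step is to cut $S$ down to the region where $g$ is bounded away from $0$, obtaining a zero-set that still contains $x$ but meets $T$ in the empty set. Concretely, consider
\[W=S\cap g^{-1}\big([1/n,1]\big).\]
I would first check that $W\in{\mathscr Z}(X^*)$: indeed $g^{-1}([1/n,1])=Z(\max\{1/n-g,0\})\in{\mathscr Z}(X^*)$, and the intersection of two zero-sets is a zero-set. Moreover $W$ is non-empty, as $x\in S$ together with $g(x)\geq 1/n$ gives $x\in W$.

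Now I would apply Lemma \ref{JGRD} to the non-empty zero-set $W$ to obtain a point $y\in W\cap\sigma X$ (note that, as $W\subseteq X^*$, this intersection coincides with $W\cap(\sigma X\backslash X)$). On the one hand, $y\in S\cap\sigma X\subseteq T\cap\sigma X\subseteq Z(g)$, so $g(y)=0$; on the other hand, $y\in W\subseteq g^{-1}([1/n,1])$ forces $g(y)\geq 1/n>0$. This contradiction shows $S\subseteq T$.

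The only genuine idea here is the construction of the auxiliary zero-set $W$; everything else is a direct invocation of Lemma \ref{JGRD}. I do not expect a serious obstacle, the single point requiring care being the verification that $g^{-1}([1/n,1])$ is itself a zero-set of $X^*$, so that $W$ qualifies as input to Lemma \ref{JGRD}.
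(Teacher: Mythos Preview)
Your proof is correct and follows essentially the same route as the paper: argue by contradiction, build from a point of $S\backslash T$ a non-empty zero-set of $X^*$ contained in $S$ and disjoint from $T$, and then invoke Lemma \ref{JGRD} to reach a contradiction with the hypothesis $S\cap\sigma X\subseteq T\cap\sigma X$. The only cosmetic difference is that the paper obtains the auxiliary zero-set via a Urysohn-type function $f:\beta X\to\mathbf{I}$ separating the point from $T$, whereas you work directly with the function $g$ defining $T$ on $X^*$; both constructions serve the same purpose.
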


\begin{proof}
Suppose to the contrary that $S\backslash T\neq\emptyset$. Let $s\in S\backslash T$. Let $f:\beta X\rightarrow\mathbf{I}$ be continuous and such that
$f(s)=0$ and $f(T)\subseteq\{1\}$. Then $Z(f)\cap S$ is non-empty, and thus by Lemma \ref{JGRD} it follows that $Z(f)\cap S\cap\sigma X\neq\emptyset$. But this is not
possible, as
\[Z(f)\cap S\cap\sigma X\subseteq Z(f)\cap T=\emptyset.\]
\end{proof}

The following lemma is from \cite{Ko2}.

\begin{lemma}\label{FSFH}
Let $X$ and $Y$ be locally compact spaces. The following are equivalent:
\begin{itemize}
\item[\rm(1)] $X^*$ and $Y^*$ are homeomorphic.
\item[\rm(2)] $({\mathscr E}^{\,C}(X),\leq)$ and $({\mathscr E}^{\,C}(Y),\leq)$ are order-isomorphic.
\end{itemize}
\end{lemma}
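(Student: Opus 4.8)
The plan is to move the whole question from one-point \v{C}ech-complete extensions to zero-sets of the outgrowths, and then to carry out a purely order-theoretic reconstruction of the compact space $X^*$ from the poset of its non-empty zero-sets. Combining Lemma \ref{JHV} (in the form recorded in Notation \ref{JLLHV}) with Lemma \ref{HDJYYH}, and using that $X^*$ is compact (so that every zero-set of $X^*$ is compact, hence lies in the range of $\Theta_X$), one sees that $\Theta_X$ restricts to an anti-order-isomorphism
\[\Theta_X:\big({\mathscr E}^{\,C}(X),\leq\big)\longrightarrow\big({\mathscr Z}(X^*)\backslash\{\emptyset\},\subseteq\big),\]
and likewise for $Y$. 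Hence condition (2) is equivalent to the statement that the posets $({\mathscr Z}(X^*)\backslash\{\emptyset\},\subseteq)$ and $({\mathscr Z}(Y^*)\backslash\{\emptyset\},\subseteq)$ are order-isomorphic, and the lemma reduces to proving that the compact spaces $X^*$ and $Y^*$ are homeomorphic precisely when their posets of non-empty zero-sets are order-isomorphic.

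For the implication (1)$\Rightarrow$(2), a homeomorphism $h:X^*\to Y^*$ induces the assignment $Z\mapsto h(Z)$, which is an order-isomorphism from $({\mathscr Z}(X^*)\backslash\{\emptyset\},\subseteq)$ onto $({\mathscr Z}(Y^*)\backslash\{\emptyset\},\subseteq)$, since a homeomorphism carries zero-sets onto zero-sets and preserves inclusion. Composing this order-isomorphism with the two anti-order-isomorphisms $\Theta_X$ and $\Theta_Y^{-1}$ produces an order-isomorphism between $({\mathscr E}^{\,C}(X),\leq)$ and $({\mathscr E}^{\,C}(Y),\leq)$, because a composite of an anti-order-isomorphism, an order-isomorphism and an anti-order-isomorphism is an order-isomorphism.

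The substance lies in (2)$\Rightarrow$(1). Let $\phi:({\mathscr Z}(X^*)\backslash\{\emptyset\},\subseteq)\to({\mathscr Z}(Y^*)\backslash\{\emptyset\},\subseteq)$ be an order-isomorphism. The key point is that the notion of a $z$-ultrafilter is expressible entirely in the order: a family of non-empty zero-sets is a $z$-filter exactly when it is an up-set that is downward directed (given two members, some member lies below both, and such a member forces their intersection, necessarily non-empty, into the family), and a $z$-ultrafilter is a maximal $z$-filter. Since $X^*$ is compact, the map $x\mapsto{\mathscr A}^x=\{Z\in{\mathscr Z}(X^*):x\in Z\}$ is a bijection from $X^*$ onto the set of $z$-ultrafilters on $X^*$, and similarly for $Y^*$. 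As $\phi$ preserves up-sets, downward directedness and maximality, it carries $z$-ultrafilters to $z$-ultrafilters, so it induces a bijection $\psi:X^*\to Y^*$ determined by $\{\phi(Z):Z\in{\mathscr A}^x\}={\mathscr A}^{\psi(x)}$.

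It remains to verify that $\psi$ is a homeomorphism. Unwinding the defining relation shows that $\psi(x)\in\phi(Z)$ if and only if $x\in Z$ for every $Z\in{\mathscr Z}(X^*)$, whence $\psi(Z)=\phi(Z)$; thus $\psi$ maps zero-sets onto zero-sets, and by the symmetric argument using $\phi^{-1}$ so does $\psi^{-1}$. Since in the Tychonoff spaces $X^*$ and $Y^*$ every closed set is an intersection of zero-sets, and a bijection commutes with intersections, both $\psi$ and $\psi^{-1}$ carry closed sets to closed sets, so $\psi$ is a homeomorphism. I expect the main obstacle to be exactly this reconstruction step, namely pinning down points of $X^*$ order-theoretically as maximal downward-directed up-sets of zero-sets and then confirming that the induced point bijection is a homeomorphism; the bookkeeping with $\Theta_X$ and the direction (1)$\Rightarrow$(2) should be routine.
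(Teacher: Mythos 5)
Your proposal is correct and follows essentially the same route as the paper: the paper reduces the statement, via $\Theta_X$ and Lemma \ref{HDJYYH}, to the fact that the order-structure of the poset of zero-sets of a compact space determines its topology, and simply cites that fact. You supply a complete proof of it (identifying points with $z$-ultrafilters, which are order-theoretically definable as maximal downward-directed up-sets, and checking that the induced point bijection sends zero-sets to zero-sets and hence closed sets to closed sets), so your write-up is a correctly fleshed-out version of the paper's one-line argument.
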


\begin{proof}
This follows from the fact that in  a compact space the order-structure of the set of its all zero-sets (partially ordered with $\subseteq$) determines its topology.
\end{proof}

The proof of the following theorem is essentially a combination of the proofs we have given for Theorems 3.19 and 3.21 in \cite{Ko2} with the appropriate usage of the preceding lemmas. The reasonably detailed proof is included here for the reader's convenience.

\begin{theorem}\label{HJHGF}
Let $X$ and $Y$  be locally compact paracompact (non-compact) spaces  and let  ${\mathcal P}$ be a closed hereditary topological property of compact spaces  which is preserved under finite sums of subspaces and coincides with $\sigma$-compactness in the realm of locally compact paracompact spaces. The following are equivalent:
\begin{itemize}
\item[\rm(1)] $\lambda_{{\mathcal P}} X\backslash X$ and $\lambda_{{\mathcal P}} Y\backslash Y$ are homeomorphic.
\item[\rm(2)] $({\mathscr E}^{\,C}_{\,{\mathcal P}}(X), \leq)$ and $({\mathscr E}^{\,C}_{\,{\mathcal P}}(Y), \leq)$ are order-isomorphic.
\item[\rm(3)] $({\mathscr E}^{\,*}_{\,local-{\mathcal P}}(X), \leq)$ and $({\mathscr E}^{\,*}_{\,local-{\mathcal P}}(Y), \leq)$ are order-isomorphic.
\end{itemize}
\end{theorem}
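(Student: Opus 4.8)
Throughout, write $L_X=\lambda_{{\mathcal P}} X\backslash X$ and $A_X=\beta X\backslash\lambda_{{\mathcal P}} X$, and recall from Lemma \ref{GFFYH} that $\lambda_{{\mathcal P}} X=\sigma X$; thus $L_X$ is open, and $A_X=X^*\backslash L_X$ is compact, in the compact space $X^*$. The plan is to transport everything to the outgrowth through $\Theta_X$ and then to recognize the two extension classes as intrinsic zero-set posets of $L_X$. By Lemma \ref{JHV} the map $\Theta_X$ is an anti-order-isomorphism, and by Lemmas \ref{HJGHG} and \ref{HDGF} it carries ${\mathscr E}^{\,C}_{\,{\mathcal P}}(X)$ onto $\mathcal C_X:=\{Z\in{\mathscr Z}(X^*):A_X\subseteq Z\}\backslash\{\emptyset\}$ and ${\mathscr E}^{\,*}_{\,local-{\mathcal P}}(X)$ onto $\mathcal L_X:=\{Z\in{\mathscr Z}(\beta X):\emptyset\neq Z\subseteq L_X\}$, each ordered by $\subseteq$. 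Conjugating an order-isomorphism by these anti-order-isomorphisms again yields an order-isomorphism, so (2) is equivalent to $(\mathcal C_X,\subseteq)\cong(\mathcal C_Y,\subseteq)$ and (3) to $(\mathcal L_X,\subseteq)\cong(\mathcal L_Y,\subseteq)$. It therefore suffices to show that each of these poset conditions is equivalent to (1).

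Next I would give these posets an intrinsic description on $L_X$. For $\mathcal L_X$ I claim it is exactly the poset of nonempty compact zero-sets of the locally compact space $L_X$. One inclusion is immediate, since a member of $\mathcal L_X$ is closed in $\beta X$, hence compact, and restricts to a zero-set of $L_X$. For the reverse inclusion, given a compact zero-set $W$ of $L_X$, the decomposition $X=\bigoplus_{i\in I}X_i$ of Notation \ref{FTFE} places $W$, by compactness, inside a single clopen piece $\mbox{cl}_{\beta X}X_{J_0}\cong\beta X_{J_0}$ with $J_0$ countable, where $L_X$ meets $\mbox{cl}_{\beta X}X_{J_0}$ precisely in $X_{J_0}^*$; being compact and a $G_\delta$ in the compact space $\beta X_{J_0}$, the set $W$ is a zero-set there, and since the piece is clopen, $W\in{\mathscr Z}(\beta X)$. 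For $\mathcal C_X$ I would use Lemma \ref{KHJGD}: the assignment $Z\mapsto Z\cap L_X$ is order-faithful on ${\mathscr Z}(X^*)$, and since every $Z\in\mathcal C_X$ equals $A_X\cup(Z\cap L_X)$, this realizes $(\mathcal C_X,\subseteq)$ order-isomorphically inside the zero-sets of $L_X$. Equivalently, when $A_X\neq\emptyset$, contracting the closed set $A_X$ to a point turns $X^*$ into the one-point compactification $\omega L_X$ of $L_X$, under which $\mathcal C_X$ becomes the poset of zero-sets of $\omega L_X$ containing the point at infinity.

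The forward implications are now routine: a homeomorphism $\varphi\colon L_X\to L_Y$ carries compact zero-sets to compact zero-sets, giving $(\mathcal L_X,\subseteq)\cong(\mathcal L_Y,\subseteq)$, and extends to a homeomorphism $\omega L_X\to\omega L_Y$ fixing the points at infinity, giving $(\mathcal C_X,\subseteq)\cong(\mathcal C_Y,\subseteq)$. The substance is in the converses, where one must recover the topology of $L_X$ from the bare order-structure of a family of zero-sets; this is the analogue of Lemma \ref{FSFH} and is the main obstacle. The template is that, in a compact space, the inclusion-order of its zero-sets determines its topology. To apply it here I would first note that the pieces $X_{J}^*$ are compact and clopen in $L_X$, whence the compact zero-sets of $L_X$ are cofinal among its compact subsets and separate its points; this lets one reconstruct the topology of $L_X$ from $(\mathcal L_X,\subseteq)$, giving $(3)\Rightarrow(1)$. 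For $(2)\Rightarrow(1)$ the corresponding hurdle is to locate the point at infinity of $\omega L_X$ order-theoretically within $(\mathcal C_X,\subseteq)$ and then recover $\omega L_X$, and hence $L_X$; in the degenerate case $A_X=\emptyset$ (that is, $X$ $\sigma$-compact) one has $L_X=X^*$ compact and $\mathcal C_X={\mathscr Z}(X^*)\backslash\{\emptyset\}$, so this step reduces directly to the principle underlying Lemma \ref{FSFH}. Identifying the distinguished set $A_X$ (equivalently the point at infinity) purely from the order is the delicate point that I expect to require the most care.
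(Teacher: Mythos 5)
Your reductions are sound and essentially coincide with the paper's set-up: via $\Theta_X$ and Lemmas \ref{HJGHG} and \ref{HDGF} you correctly identify the two extension posets with $\mathcal C_X=\{Z\in{\mathscr Z}(X^*):\beta X\backslash\sigma X\subseteq Z\}\backslash\{\emptyset\}$ and $\mathcal L_X=\{Z\in{\mathscr Z}(\beta X):\emptyset\neq Z\subseteq\sigma X\backslash X\}$, and your intrinsic descriptions of these (the nonempty compact zero-sets of $\sigma X\backslash X$, respectively the zero-sets of $\omega(\sigma X\backslash X)$ through the point at infinity) are correct and in fact make the implications $(1)\Rightarrow(2)$ and $(1)\Rightarrow(3)$ cleaner than the paper's explicit construction of the map $\phi$. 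The problem is that the converse implications --- which are the substance of the theorem and occupy most of the paper's proof --- are not proved; you explicitly defer them as ``the main obstacle'' and ``the delicate point.'' Two concrete things are missing. First, you never show that an order-isomorphism of the posets forces $X$ and $Y$ to be simultaneously $\sigma$-compact or simultaneously non-$\sigma$-compact; the paper does this by order-theoretic invariants (for $(2)\Rightarrow(1)$: $\mathcal C_X$ is closed under nonempty finite intersections exactly when $X$ is non-$\sigma$-compact, while for $\sigma$-compact $X$ there are two disjoint members; for $(3)\Rightarrow(1)$: existence of a smallest element together with Lemma \ref{KHJGD}). Without this, your appeal to ``the degenerate case'' is circular, since whether one is in that case is part of what must be extracted from the order.

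Second, and more seriously, the actual recovery of the topology of $\sigma X\backslash X$ from the bare order is where all the work lies, and asserting that compact zero-sets ``are cofinal and separate points'' does not discharge it. The paper's route is to promote the given order-isomorphism $\phi$ of the sub-posets to an order-isomorphism $\psi$ of the \emph{full} zero-set lattices of the one-point compactifications $\omega\sigma X\backslash X$ and $\omega\sigma Y\backslash Y$, and only then invoke the principle behind Lemma \ref{FSFH}. Defining $\psi$ on zero-sets through (or missing) $\Omega$ requires representing them as $X^*\backslash\bigcup_n Z_n$ (or the analogous cozero complement) and proving that the result is independent of the representation; this is the content of the two Claims in the paper, which are purely order-theoretic manipulations (e.g.\ deducing $C\subseteq A\cup Z$ from $\phi(C\cap Z_n)\subseteq\phi(A)$ for all $n$) and are not routine. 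Your alternative of reconstructing the locally compact space $\sigma X\backslash X$ directly from its poset of compact zero-sets would also need a proof (the standard lattice-determines-topology result is stated for compact spaces and full zero-set lattices), and in particular your final sentence concedes that locating $\beta X\backslash\sigma X$ order-theoretically is unresolved. As it stands the proposal proves $(1)\Rightarrow(2)$ and $(1)\Rightarrow(3)$ but only outlines a strategy for $(2)\Rightarrow(1)$ and $(3)\Rightarrow(1)$.
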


\begin{proof}
Let
\[X=\bigoplus_{i\in I} X_i\mbox{ and }Y=\bigoplus_{j\in J} Y_j\]
for some index sets $I$ and $J$ with  each $X_i$ and $Y_j$ for $i\in I$ and $j\in J$ being $\sigma$-compact and non-compact. We will use notation of Notation \ref{FTFE} and Remark \ref{FTDE} without reference. Note that by Lemma  \ref{GFFYH} we have $\lambda_{{\mathcal P}} X=\sigma X$ and $\lambda_{{\mathcal P}} Y=\sigma Y$. Let
\[\omega\sigma X=\sigma X\cup\{\Omega\}\mbox{ and } \omega\sigma Y=\sigma Y\cup\{\Omega'\}\]
denote the one-point compactifications of $\sigma X$ and $\sigma Y$, respectively.

(1) {\em  implies} (2).  Suppose that (1) holds. Suppose that either $X$ or $Y$, say $X$, is $\sigma$-compact. Then $\sigma Y\backslash Y$ is compact, as it is homeomorphic to $\sigma X\backslash X=X^*$, and the latter is compact, as $X$ is locally compact. Thus
\[\sigma Y\backslash Y=Y_{H_1}^*\cup\cdots\cup Y_{H_n}^*= Y_H^*\]
where $n\in \mathbf{N}$, the sets  $H_1,\ldots,H_n\subseteq J$ are countable and
\[H=H_1\cup\cdots\cup H_n.\]
Now, if there exists some
$u\in J\backslash H$, then
since $Y_u\cap Y_H=\emptyset$ we have
\[\mbox{cl}_{\beta Y}Y_u\cap\mbox{cl}_{\beta Y}Y_H=\emptyset.\]
Therefore $\mbox{cl}_{\beta Y}Y_u\subseteq Y$, contradicting the fact that $Y_u$ is non-compact. Thus $J=H$ and   $Y$ is  $\sigma$-compact. Therefore
$\sigma Y\backslash Y=Y^*$. Note that by Lemmas \ref{HDJYYH} and \ref{HJGHG} we have ${\mathscr E}^{\,C}_{\,{\mathcal P}}(X)={\mathscr E}^{\,C}(X)$ and ${\mathscr E}^{\,C}_{\,{\mathcal P}}(Y)={\mathscr E}^{\,C}(Y)$. The result now follows from Lemma \ref{FSFH}.

Suppose that $X$ and $Y$ are non-$\sigma$-compact. Let $f:\sigma X\backslash X\rightarrow\sigma Y\backslash Y$ denote a
homeomorphism. We define an order-isomorphism
\[\phi:\big(\Theta_X\big({\mathscr E}^{\,C}_{\,{\mathcal P}}(X)\big),\subseteq\big)\rightarrow\big(\Theta_Y\big({\mathscr E}^{\,C}_{\,{\mathcal P}}(Y)\big),\subseteq\big).\]
Since $\Theta_X$ and $\Theta_Y$ are
anti-order-isomorphisms, this will prove (2). Let $D\in\Theta_X({\mathscr E}^{\,C}_{\,{\mathcal P}}(X))$. By Lemma \ref{HJGHG} we have $D\in
{\mathscr Z}(X^*)$ and $\beta X\backslash\sigma X\subseteq D$. Since $X^*\backslash D\subseteq\sigma X$, being a cozero-set in $X^*$ is $\sigma$-compact, there exists a countable
$G\subseteq I$ such that $X^*\backslash D\subseteq X_G^*$. Now, since $D\cap X_G^*\in {\mathscr Z}(X_G^*)$, we have
\[f(D\cap X_G^*)\in {\mathscr Z}\big(f(X_G^*)\big).\]
Since $X_G^*$ is open in $\sigma
X\backslash X$, its homeomorphic image  $f(X_G^*)$ is open in $\sigma Y\backslash
Y$, and thus, is open in $Y^*$. But $f(X_G^*)$ is compact, as it is a continuous image of a compact space, and therefore, $f(X_G^*)$ is clopen in $Y^*$. Thus
\[f(D\cap X_G^*)\cup\big(Y^*\backslash f(X_G^*)\big)\in {\mathscr Z}(Y^*).\]
Let
\[\phi(D)=f\big(D\cap(\sigma X\backslash X)\big)\cup (\beta Y\backslash\sigma Y).\]
Note that since
\begin{eqnarray*}
f\big(D\cap(\sigma X\backslash X)\big)&=&f\big((D\cap X_G^*)\cup\big((\sigma X\backslash X)\backslash X_G^*\big)\big)\\&=&f(D\cap X_G^*)\cup\big((\sigma Y\backslash Y)\backslash f(X_G^*)\big)
\end{eqnarray*}
we have
\begin{eqnarray*}
\phi(D)&=&f\big(D\cap(\sigma X\backslash X)\big)\cup(\beta Y\backslash\sigma Y)\\&=&f(D\cap X_G^*)\cup\big((\sigma Y\backslash Y)\backslash f(X_G^*)\big)\cup(\beta Y\backslash\sigma Y)\\&=&f(D\cap X_G^*)\cup\big(Y^*\backslash f(X_G^*)\big)
\end{eqnarray*}
which shows that $\phi$ is well-defined. The function $\phi$ is clearly an order-homomorphism. Since $f^{-1}:\sigma Y\backslash Y\rightarrow\sigma X\backslash X$ also is a homeomorphism, as above, it induces an order-homomorphism
\[\psi:\big(\Theta_Y\big({\mathscr E}^{\,C}_{\,{\mathcal P}}(Y)\big),\subseteq\big)\rightarrow\big(\Theta_X\big({\mathscr E}^{\,C}_{\,{\mathcal P}}(X)\big),\subseteq\big)\]
which is defined by
\[\psi(D)=f^{-1}\big(D\cap(\sigma Y\backslash Y)\big)\cup  (\beta X\backslash\sigma X)\]
for any $D\in\Theta_Y({\mathscr E}^{\,C}_{\,{\mathcal P}}(Y))$. It is now easy to see that $\psi=\phi^{-1}$, which shows that $\phi$ is an order-isomorphism.

(2) {\em  implies} (1).  Suppose that (2) holds. Suppose that either $X$ or $Y$, say $X$,  is $\sigma$-compact (and non-compact). Then
$\sigma X= \beta X$, and thus, by Lemmas \ref{HDJYYH} and \ref{HJGHG}, we have ${\mathscr E}^{\,C}_{\,{\mathcal P}}(X)={\mathscr E}^{\,C}(X)$. Suppose that $Y$ is
non-$\sigma$-compact. Note that  $X$, being paracompact and non-compact, is non-pseudocompact (see Theorems 3.10.21, 5.1.5 and 5.1.20 of \cite{E}) and therefore, $X^*$ contains at least two
elements, as almost compact spaces are pseudocompact (see Problem 5U (1) of \cite{PW}; recall that a Tychonoff space $T$ is called {\em almost compact} if $\beta T\backslash T$ has at most one element). Thus, there exist two disjoint non-empty zero-sets of  $X^*$ corresponding to two elements in ${\mathscr E}^{\,C}(X)$  with no
common upper bound in   ${\mathscr E}^{\,C}(X)$. But this is not true, as ${\mathscr E}^{\,C}(X)$ is order-isomorphic to ${\mathscr E}^{\,C}_{\,{\mathcal P}}(Y)$,
and any two elements in the latter have a common upper bound in ${\mathscr E}^{\,C}_{\,{\mathcal P}}(Y)$.
(Note that since  $Y$ is non-$\sigma$-compact, the set $\beta Y\backslash\sigma Y$ is non-empty, and by Lemma \ref{HJGHG}, the image  of any element
in ${\mathscr E}^{\,C}_{\,{\mathcal P}}(Y)$ under $\Theta_Y$ contains $\beta Y\backslash\sigma Y$.) Therefore, $Y$ also is  $\sigma$-compact and by Lemmas \ref{HDJYYH} and \ref{HJGHG}, we have  ${\mathscr E}^{\,C}_{\,{\mathcal P}}(Y)={\mathscr E}^{\,C}(Y)$. Now, since $\sigma Y= \beta Y$, the result follows from Lemma \ref{FSFH}.

Next, suppose that $X$ and $Y$ are both non-$\sigma$-compact. We show that the two compact spaces $\omega\sigma X\backslash X$ and
$\omega\sigma Y\backslash Y$ are homeomorphic, by showing that their corresponding sets of zero-sets (partially ordered with $\subseteq$) are order-isomorphic. Since  $\Theta_X$ and $\Theta_Y$ are
anti-order-isomorphisms, condition (2) implies the existence of an order-isomorphism
\[\phi:\big(\Theta_X\big({\mathscr E}^{\,C}_{\,{\mathcal P}}(X)\big),\subseteq\big)\rightarrow\big(\Theta_Y\big({\mathscr E}^{\,C}_{\,{\mathcal P}}(Y)\big),\subseteq\big).\]
We define an order-isomorphism
\[\psi:\big({\mathscr Z}(\omega\sigma X\backslash X),\subseteq\big)\rightarrow\big({\mathscr Z}(\omega\sigma Y\backslash Y),\subseteq\big)\]
as follows. Let $Z\in{\mathscr Z}(\omega\sigma X\backslash X)$. Suppose that
$\Omega\in Z$. Then, since $(\omega\sigma X\backslash X)\backslash
Z$ is a cozero-set in (the compact space) $\omega\sigma X\backslash X$, it is
$\sigma$-compact. Thus $(\omega\sigma X\backslash X)\backslash Z\subseteq X_G^*$
for some countable $G\subseteq I$. Since $X_G^*$ is clopen in
$X^*$, we have
\[\big(Z\backslash\{\Omega\}\big)\cup(\beta X\backslash\sigma X)=(Z\cap X_G^*)\cup(X^*\backslash X_G^*)\in{\mathscr Z}(X^*).\]
In this case, we let
\[\psi(Z)=\big(\phi\big(\big(Z\backslash\{\Omega\}\big)\cup(\beta X\backslash\sigma X)\big)\backslash(\beta Y\backslash\sigma Y)\big)\cup\{\Omega'\}.\]
Now, suppose that  $\Omega\notin Z$. Then $Z\subseteq \sigma X\backslash
X$, and therefore  $Z\subseteq X_G^*$ for some
countable $G\subseteq I$, and thus, using this, one can write
\begin{equation}\label{KJUJB}
Z=X^*\backslash\bigcup_{n=1}^\infty Z_n\mbox{ where }\beta X\backslash\sigma X\subseteq Z_n\in{\mathscr Z}(X^*)\mbox{ for any } n\in \mathbf{N}.
\end{equation}
In this case, we let
\[\psi(Z)=Y^*\backslash\bigcup_{n=1}^\infty\phi (Z_n).\]
We check that $\psi$ is well-defined. Assume the representation given in (\ref{KJUJB}).
Since  $Y^*\backslash\phi(Z_n)\subseteq \sigma Y$ for all $n\in \mathbf{N}$, there exists a countable $H\subseteq
J$ such that $Y^*\backslash\phi(Z_n)\subseteq Y_H^*$ for all $n\in \mathbf{N}$.

\begin{xclaim}
For a $Z\in{\mathscr Z}(\omega\sigma X\backslash X)$ with $\Omega\notin Z$ assume the representation given in (\ref{KJUJB}). Let $H\subseteq J$ be countable and such that $Y^*\backslash\phi(Z_n)\subseteq Y_H^*$ for all $n\in \mathbf{N}$. Let $A$ be such that
$\phi (A)=Y^*\backslash Y_H^*$. Then
\[Y^*\backslash\bigcup_{n=1}^\infty \phi(Z_n)=\phi(A\cup Z)\backslash\phi(A).\]
\end{xclaim}

\subsubsection*{Proof of the claim} Suppose that $y\in Y^*$ and $y\notin\phi (Z_n)$ for all $n\in \mathbf{N}$. If
$y\notin\phi (A\cup Z)\backslash \phi (A)$,  then since $y\notin
\phi(Z_1)\supseteq \phi(A)$  we have $y\notin \phi (A\cup Z)$. Therefore, there exists some $B\in {\mathscr Z}(Y^*)$ containing $y$
such that $B\cap \phi (A\cup Z)=\emptyset$ and $B\cap \phi (Z_n)=\emptyset$ for all $n\in \mathbf{N}$. Let $C$ be such that
$\phi(C)=B\cup\phi (A\cup Z)$, and let  $S_n$ for any $n\in \mathbf{N}$, be
such that
\begin{eqnarray*}
\phi(S_n)&=&\phi(C)\cap\phi(Z_n)\\&=&\big(B\cup\phi(A\cup Z)\big)\cap\phi(Z_n)\\&=&\big(B\cap\phi(Z_n)\big)\cup\big(\phi(A\cup Z)\cap\phi(Z_n)\big)=\phi(A\cup Z)\cap\phi(Z_n).
\end{eqnarray*}
Since $A\subseteq Z_n$, as $\phi(A)\subseteq \phi(Z_n)$ and $Z\cap
Z_n=\emptyset$, we have $A\cap Z=\emptyset$, which implies that
\[(A\cup Z)\cap Z_n=(A\cap Z_n)\cup(Z\cap Z_n)=A\]
for all $n\in \mathbf{N}$. Clearly $S_n\subseteq(A\cup Z)\cap Z_n$, as by above $\phi(S_n)\subseteq\phi(A\cup Z)$ and $\phi(S_n)\subseteq\phi(Z_n)$ for any $n\in\mathbf{N}$. Thus $\phi(S_n)\subseteq\phi(A)$ for all $n\in \mathbf{N}$.
But since $\phi(A)\subseteq\phi(Z_n)$, we have $\phi(A)\subseteq\phi(S_n)$, and therefore
\[\phi(C\cap Z_n)\subseteq \phi (C)\cap\phi (Z_n)=\phi(S_n)=\phi(A)\]
for any $n\in \mathbf{N}$. This implies that $C\cap Z_n\subseteq A$ for all $n\in \mathbf{N}$. Thus
\[C\backslash Z=C\cap\bigcup_{n=1}^\infty Z_n=\bigcup_{n=1}^\infty (C\cap Z_n)\subseteq A.\]
Therefore $C\subseteq A\cup Z$ and
we have $B\subseteq\phi (C)\subseteq\phi(A\cup Z)$, which is a contradiction, as $B\cap\phi(A\cup Z)=\emptyset$. This shows that
\[Y^*\backslash\bigcup_{n=1}^\infty\phi(Z_n)\subseteq\phi (A\cup Z)\backslash \phi(A).\]
Now, suppose that $y\in\phi(A\cup Z)\backslash\phi(A)$. Suppose to the contrary
that $y\in\phi(Z_n)$ for some $n\in \mathbf{N}$. Then
\[y\in\phi(Z_n)\cap\phi(A\cup Z)=\phi(D)\]
for some $D$. Clearly $D\subseteq Z_n$ and $D\subseteq A\cup Z$, as $\phi(D)\subseteq\phi (Z_n)$ and $\phi(D)\subseteq\phi(A\cup Z)$. This implies that \[D\subseteq Z_n\cap( A\cup Z)=(Z_n\cap A)\cup (Z_n\cap Z)=Z_n\cap A\subseteq A\]
and thus $y\in\phi (A)$, as $\phi(D)\subseteq\phi(A)$, which is a contradiction. This proves the claim.

\noindent Now, suppose that
\[Z=X^*\backslash\bigcup_{n=1}^\infty S_n\mbox{ and }Z=X^*\backslash \bigcup_{n=1}^\infty Z_n\]
are two representations for some $Z\in{\mathscr Z}(\omega\sigma X\backslash X)$
with  $\Omega\notin Z$ such that each $S_n,Z_n\in {\mathscr Z}(X^*) $ contains $\beta X\backslash\sigma X$ for $n\in \mathbf{N}$. Choose a countable
$H\subseteq J$ such that
\[Y^*\backslash\phi (S_n)\subseteq Y_H^*\mbox{ and }Y^*\backslash\phi (Z_n)\subseteq Y_H^*\]
for all $n\in \mathbf{N}$. Then, by the claim, we have
\[Y^*\backslash\bigcup_{n=1}^\infty \phi(S_n)=\phi (A\cup
Z)\backslash\phi(A)=Y^*\backslash\bigcup_{n=1}^\infty \phi(Z_n)\]
where $A$ is such that $\phi(A)=Y^*\backslash Y_H^*$. This shows that $\psi$ is well-defined.
Next, we show that $\psi$ is an order-isomorphism. Suppose that  $S,Z\in{\mathscr Z}(\omega\sigma X\backslash X)$  and
$S\subseteq Z$. We consider the following cases.

\begin{description}
  \item[Case 1] Suppose that $\Omega\in S$. Then $\Omega\in Z$, and clearly
\begin{eqnarray*}
\psi(S)&=&\big(\phi\big(\big(S\backslash\{\Omega\}\big)\cup(\beta X\backslash\sigma X)\big)\backslash(\beta Y\backslash\sigma Y)\big)\cup\{\Omega'\}\\&\subseteq&\big(\phi\big(\big(Z\backslash\{\Omega\}\big)\cup(\beta X\backslash\sigma X)\big)\backslash(\beta Y\backslash\sigma Y)\big)\cup\{\Omega'\}=\psi(Z).
\end{eqnarray*}
  \item[Case 2] Suppose that $\Omega\notin S$ but $\Omega\in Z$. Let
\[E=\phi\big(\big(Z\backslash\{\Omega\}\big)\cup(\beta X\backslash \sigma X)\big)\]
and let
\[S=X^*\backslash\bigcup_{n=1}^\infty S_n\]
where each $S_n\in{\mathscr Z}(X^*)$ contains $\beta
X\backslash\sigma X$ for $n\in \mathbf{N}$. Clearly $Y^*\backslash E\subseteq \sigma Y$.
Let $H\subseteq J$ be countable and such that $Y^*\backslash\phi(S_n)\subseteq Y_H^*$ for all $n\in \mathbf{N}$ and $Y^*\backslash E\subseteq Y_H^*$. By the claim, we have $\psi(S)=\phi(A\cup S)\backslash\phi (A)$, where
$\phi (A)=Y^*\backslash Y_H^*$. Since
$Y^*\backslash Y_H^*\subseteq E$, we have
\[A\subseteq\big(Z\backslash\{\Omega\}\big)\cup(\beta X\backslash\sigma X).\]
Now
\[\psi(S)=\phi(A\cup S)\backslash\phi(A)\subseteq\phi(A\cup S)\subseteq\phi\big(\big(Z\backslash\{\Omega\}\big)\cup(\beta X\backslash\sigma
X)\big)\]
which implies that
\[\psi(S)\subseteq\big(\phi\big(\big(Z\backslash\{\Omega\}\big)\cup(\beta X\backslash\sigma X)\big)\backslash(\beta Y\backslash\sigma Y)\big)\cup\{\Omega'\}=\psi(Z).\]
  \item[Case 3] Suppose that $\Omega\notin Z$. Then  $\Omega\notin S$. Let
\[S=X^*\backslash\bigcup_{n=1}^\infty S_n\mbox{ and }Z=X^*\backslash\bigcup_{n=1}^\infty Z_n\]
where each $S_n, Z_n\in {\mathscr Z}(X^*)$ contains $\beta X\backslash\sigma X$ for $n\in \mathbf{N}$. Clearly
\[S=S\cap Z=\Big(X^*\backslash\bigcup_{n=1}^\infty S_n\Big)\cap\Big(X^*\backslash\bigcup_{n=1}^\infty Z_n\Big)=
X^*\backslash\bigcup_{n=1}^\infty(S_n\cup Z_n)\]
and thus, since $\phi(Z_n)\subseteq\phi(S_n\cup Z_n)$ for all $n\in \mathbf{N}$, it follows that
\[\psi(S)=Y^*\backslash\bigcup_{n=1}^\infty \phi(S_n\cup
Z_n)\subseteq Y^*\backslash\bigcup_{n=1}^\infty \phi(Z_n)=\psi(Z).\]
\end{description}

\noindent Note that since
\[\phi^{-1}:\big(\Theta_Y\big({\mathscr E}^{\,C}_{\,{\mathcal P}}(Y)\big),\subseteq\big)\rightarrow\big(\Theta_X\big({\mathscr E}^{\,C}_{\,{\mathcal P}}(X)\big),\subseteq\big)\]
also is an order-isomorphism, as above, it induces an order-isomorphism
\[\gamma:\big({\mathscr Z}(\omega\sigma Y\backslash Y),\subseteq\big)\rightarrow\big({\mathscr Z}(\omega\sigma X\backslash X),\subseteq\big)\]
which is easy to see that $\gamma=\psi^{-1}$. Therefore, $\psi$ is an order-isomorphism.
It then follows that there exists a homeomorphism $f:\omega\sigma X\backslash X\rightarrow\omega\sigma Y\backslash
Y$ such that $f(Z)=\psi(Z)$, for any $Z\in
{\mathscr Z}(\omega\sigma X\backslash X)$. Now since for each
countable $G\subseteq I$ we have
\[f(X_G^*)=\psi(X_G^*)\subseteq\sigma Y\backslash Y\]
it follows that $f(\sigma X\backslash X)=\sigma Y\backslash Y$. Thus $\sigma X\backslash X$ and $\sigma Y\backslash Y$  are
homeomorphic.

(1) {\em  implies} (3). Suppose that (1) holds.  Suppose that either $X$ or $Y$, say $X$,  is $\sigma$-compact. Then $\sigma X=\beta X$ and thus,
arguing as in part (1)$\Rightarrow$(2),
it follows that $Y$ also is $\sigma$-compact. Therefore $\sigma Y=\beta Y$.  Note that by  Lemmas \ref{JYYH} and \ref{HDGF}  we have ${\mathscr E}^{\,*}_{\,local-{\mathcal P}}(X)={\mathscr E}^{\,*}(X)$ and since $X^*\in {\mathscr Z}(\beta X)$  (as $X$  is $\sigma$-compact and locally compact; see 1B of \cite{W}) by Lemmas \ref{JYYH} and  \ref{HDJYYH} we have  ${\mathscr E}^{\,*}(X)={\mathscr E}^{\,C}(X)$. Thus   ${\mathscr E}^{\,*}_{\,local-{\mathcal P}}(X)={\mathscr E}^{\,C}(X)$ and similarly  ${\mathscr E}^{\,*}_{\,local-{\mathcal P}}(Y)={\mathscr E}^{\,C}(Y)$. The result now follows from Lemma \ref{FSFH}.

Suppose that $X$ and $Y$ are non-$\sigma$-compact. Let  $f:\sigma
X\backslash X\rightarrow \sigma Y\backslash Y$ be a homeomorphism. We define an order-isomorphism
\[\phi:\big(\Theta_X \big({\mathscr E}^{\,*}_{\,local-{\mathcal P}}(X)\big),\subseteq\big)\rightarrow\big(\Theta_Y\big({\mathscr E}^{\,*}_{\,local-{\mathcal P}}(Y)\big),\subseteq\big)\]
as follows. Let $Z\in\Theta_X ({\mathscr E}^{\,*}_{\,local-{\mathcal P}}(X))$. By Lemma \ref{HDGF} we have  $Z\in{\mathscr Z}(\beta X)$ and $Z\subseteq\sigma X\backslash X$. Thus $Z\subseteq X_G^*$ for some countable $G\subseteq I$. Now $f(Z)\in {\mathscr Z}(\sigma Y\backslash Y)$ and since $f(Z)$ is compact, as it is a continuous image of a compact space, it follows that $f(Z)\subseteq Y_H^*$ for some countable $H\subseteq J$. Therefore $f(Z)\in{\mathscr Z}(Y_H^*)$ and then $f(Z)\in{\mathscr Z}(\mbox{cl}_{\beta Y} Y_H)$. Since $\mbox{cl}_{\beta Y} Y_H$ is clopen in $\beta Y$ we have $f(Z)\in {\mathscr Z}(\beta Y)$. Define
\[\phi(Z)=f(Z).\]
It is obvious that $\phi$ is an order-homomorphism. If we let
\[\psi:\big(\Theta_Y\big({\mathscr E}^{\,*}_{\,local-{\mathcal P}}(Y)\big),\subseteq\big)\rightarrow\big(\Theta_X \big({\mathscr E}^{\,*}_{\,local-{\mathcal P}}(X)\big),\subseteq\big)\]
be defined by
\[\psi(Z)=f^{-1}(Z)\]
for any $Z\in\Theta_Y({\mathscr E}^{\,*}_{\,local-{\mathcal P}}(Y))$, then $\psi=\phi^{-1}$ which shows that  $\phi$ is an order-isomorphism.

(3) {\em  implies} (1). Suppose that (3) holds. Suppose that either $X$ or $Y$, say $X$, is $\sigma$-compact (and non-compact). Then $\sigma X=\beta X$, and thus, by Lemmas \ref{JYYH} and \ref{HDGF}, we have ${\mathscr E}^{\,*}_{\,local-{\mathcal P}}(X)={\mathscr E}^{\,*}(X)$. Therefore, since $X^*\in {\mathscr Z}(\beta X)$ the set ${\mathscr E}^{\,*}_{\,local-{\mathcal P}}(X)$ has the smallest  element (namely, its one-point compactification $\omega X$). Thus ${\mathscr E}^{\,*}_{\,local-{\mathcal P}}(Y)$ also has the smallest element; denote this element by $T$.  Then, for each countable $H\subseteq J$ we have
\[Y_H^*\in \Theta_Y\big({\mathscr E}^{\,*}_{\,local-{\mathcal P}}(Y)\big)\]
and therefore $\sigma Y\backslash Y\subseteq\Theta_Y(T)$. By Lemma \ref{KHJGD}
(with $\Theta_Y(T)$ and $Y^*$ as the zero-sets in its statement) we have $Y^*\subseteq\Theta_Y(T)$. This
implies that $Y^*\in {\mathscr Z}(\beta Y)$ which shows that $Y$ is $\sigma$-compact. Thus $\sigma Y=\beta Y$, and by Lemmas \ref{JYYH} and \ref{HDGF}, we have ${\mathscr E}^{\,*}_{\,local-{\mathcal P}}(Y)={\mathscr E}^{\,*}(Y)$. Therefore, in this case  (and since by Lemmas \ref{JYYH} and \ref{HDJYYH} we have  ${\mathscr E}^{\,*}(X)={\mathscr E}^{\,C}(X)$ and ${\mathscr E}^{\,*}(Y)={\mathscr E}^{\,C}(Y)$) the result follows from Lemma \ref{FSFH}.

Next, suppose that  $X$ and $Y$ are both non-$\sigma$-compact. Since
$\Theta_X$ and $\Theta_Y$ are both anti-order-isomorphisms, there exists an order-isomorphism
\[\phi:\big(\Theta_X \big({\mathscr E}^{\,*}_{\,local-{\mathcal P}}(X)\big),\subseteq\big)\rightarrow\big(\Theta_Y\big({\mathscr E}^{\,*}_{\,local-{\mathcal P}}(Y)\big),\subseteq\big).\]
We extend $\phi$ by letting $\phi(\emptyset)=\emptyset$. We define a function
\[\psi:\big({\mathscr Z}(\omega\sigma X\backslash X),\subseteq\big)\rightarrow\big({\mathscr Z}(\omega\sigma Y\backslash Y),\subseteq\big)\]
and verify that it is an order-isomorphism. Let $Z\in {\mathscr Z}(\omega\sigma X\backslash X)$ with $\Omega\notin
Z$. Since $Z\subseteq X_G^*$ for some countable
$G\subseteq I$, we have $Z\in {\mathscr Z}(\beta X)$, and therefore
\[Z\in \Theta_X \big({\mathscr E}^{\,*}_{\,local-{\mathcal P}}(X)\big)\cup\{\emptyset\}.\]
In this
case, let
\[\psi(Z)=\phi(Z).\]
Now, suppose that $Z\in {\mathscr Z}(\omega\sigma X\backslash X)$ and
$\Omega\in Z$. Then $(\omega\sigma X\backslash X)\backslash Z$ is a cozero-set in $\omega\sigma X\backslash X$, and we have
\begin{equation}\label{UHFE}
Z=(\omega\sigma X\backslash X)\backslash\bigcup_{n=1}^\infty Z_n\mbox{ where }Z_n\in{\mathscr Z}(\omega\sigma X\backslash X)\mbox{ for any }n\in \mathbf{N}.
\end{equation}
Thus, as above, it follows that
\[Z_n\in\Theta_X\big({\mathscr E}^{\,*}_{\,local-{\mathcal P}}(X)\big)\cup\{\emptyset\}\]
for any $n\in \mathbf{N}$. We verify that
\begin{equation}\label{UGHDW}
\bigcup_{n=1}^\infty \phi (Z_n)\in Coz(\omega\sigma Y\backslash Y).
\end{equation}
To show this, note that since $\phi(Z_n)\subseteq\sigma Y\backslash Y$ there exists a countable
$H\subseteq J$ such that $\phi(Z_n)\subseteq Y_H^*$ for all $n\in\mathbf{N}$.

\begin{xclaim}
For a $Z\in {\mathscr Z}(\omega\sigma X\backslash X)$ with $\Omega\in Z$ assume the representation given in (\ref{UHFE}).
Let $H\subseteq J$ be countable and such that $\phi(Z_n)\subseteq Y_H^*$ for all $n\in \mathbf{N}$. Let $A$ be such that
$\phi (A)=Y_H^*$. Then
\[\phi(A\cap Z)=\phi(A)\backslash\bigcup_{n=1}^\infty \phi (Z_n).\]
\end{xclaim}

\subsubsection*{Proof of the claim} For each
$n\in \mathbf{N}$, since $A\cap Z\cap Z_n=\emptyset$, we have $\phi(A\cap Z)\cap
\phi(Z_n)=\emptyset$, as otherwise, $\phi(A\cap Z)$ and $\phi(Z_n)$ will have a common lower bound in $\Theta_Y({\mathscr E}^{\,*}_{\,local-{\mathcal P}}(Y))$, that is, $\phi(A\cap Z)\cap\phi(Z_n)$, whereas $A\cap Z$ and $Z_n$ do not have. Also $\phi(A\cap Z)\subseteq\phi(A)$. Therefore
\[\phi(A\cap Z)\subseteq\phi(A)\backslash\bigcup_{n=1}^\infty\phi(Z_n).\]
To show the reverse inclusion, let $y\in\phi(A)$ be such that $y\notin\phi(Z_n)$ for all $n\in\mathbf{N}$. There
exists some $B\in{\mathscr Z}(\beta Y)$ such that
$y\in B$ and  $B\cap\phi(Z_n)=\emptyset$ for all $n\in\mathbf{N}$. If
$y\notin\phi(A\cap Z)$, then there exists some $C\in{\mathscr Z}(\beta Y)$ such that $y\in C$ and
$C\cap\phi(A\cap Z)=\emptyset$. Let $D=\phi(A)\cap B\cap C$ and let $E$ be such that $\phi (E)=D$. For each
$n\in \mathbf{N}$, since $\phi(E)\cap\phi(Z_n)=\emptyset$, we have $E\cap
Z_n=\emptyset$, and thus $E\subseteq Z$. On the other hand,
since $\phi(E)\subseteq\phi(A)$ we have  $E\subseteq A$, and therefore $E\subseteq
A\cap Z$. Thus $\phi (E)\subseteq\phi(A\cap Z)$, which
implies that $\phi(E)=\emptyset$, as $\phi(E)\subseteq C$. This
contradiction shows that $y\in\phi(A\cap Z)$, which proves the claim.

\noindent Let $A$ be such that
$\phi (A)=Y_H^*$. Now, $\phi(A\cap Z)\in{\mathscr Z}(\omega\sigma Y\backslash Y)$, as $\phi(A\cap Z)\subseteq\phi(A)$. By the claim we have
\begin{eqnarray*}
(\omega\sigma Y\backslash Y)\backslash\bigcup_{n=1}^\infty \phi
(Z_n)&=&\Big(\phi (A)\backslash\bigcup_{n=1}^\infty \phi (Z_n)\Big)\cup
\big((\omega\sigma Y\backslash Y)\backslash \phi (A)\big)\\&=&\phi (A\cap Z)\cup\big((\omega\sigma Y\backslash Y)\backslash
\phi(A)\big)\in{\mathscr Z}(\omega\sigma Y\backslash Y)
\end{eqnarray*}
and (\ref{UGHDW}) is verified. In this case, we let
\[\psi(Z)=(\omega\sigma Y\backslash Y)\backslash\bigcup_{n=1}^\infty \phi(Z_n).\]
Next, we show that $\psi$ is well-defined. Assume  that
\[Z=(\omega\sigma X\backslash X)\backslash\bigcup_{n=1}^\infty S_n\]
with $S_n\in {\mathscr Z}(\omega\sigma X\backslash X)$ for all $n\in \mathbf{N}$, is another
representation of $Z$. We need to show that
\begin{equation}\label{GFYDL}
\bigcup_{n=1}^\infty \phi
(Z_n)=\bigcup_{n=1}^\infty \phi (S_n).
\end{equation}
Without any loss of generality, suppose to the contrary  that there exists some $m\in \mathbf{N}$ and  $y\in
\phi(Z_m)$ such that $y\notin\phi(S_n)$ for all $n\in \mathbf{N}$. Then there exists some $A\in{\mathscr Z}(\beta Y)$ such that $y\in A$ and
$A\cap\phi (S_n)=\emptyset$ for all $n\in \mathbf{N}$. Consider
\[A\cap\phi(Z_m)\in \Theta_Y \big({\mathscr E}^{\,*}_{\,local-{\mathcal P}}(Y)\big).\]
Let $B$ be such that $\phi(B)=A\cap\phi(Z_m)$.  Since $\phi(B)\subseteq A$ we have $\phi(B)\cap\phi(S_n)=\emptyset$ from which it follows that $B\cap S_n=\emptyset$ for all $n\in \mathbf{N}$. But $B\subseteq Z_m$, as $\phi(B)\subseteq\phi(Z_m)$, and we have
\[B\subseteq\bigcup_{n=1}^\infty Z_n=\bigcup_{n=1}^\infty S_n\]
which implies that $B=\emptyset$. But this is a contradiction, as $\phi(B)\neq\emptyset$. Therefore (\ref{GFYDL}) holds, and thus $\psi$ is well-defined.
To prove that $\psi$ is an order-isomorphism, let $S,Z\in{\mathscr Z}(\omega\sigma X\backslash X)$ and $S\subseteq Z$. The case when $S=\emptyset$ holds trivially. Assume that $S\neq\emptyset$. We consider the following cases.

\begin{description}
   \item[Case 1]  Suppose that  $\Omega\notin Z$. Then  $\Omega\notin S$ and we have
\[\psi(S)=\phi(S)\subseteq\phi(Z)=\psi(Z).\]
   \item[Case 2] Suppose that $\Omega\notin S$ but  $\Omega\in Z$.  Let
\[Z=(\omega\sigma X\backslash X)\backslash\bigcup_{n=1}^\infty Z_n\]
with $Z_n\in{\mathscr Z}(\omega\sigma X\backslash X)$ for all $n\in\mathbf{N}$. Then, since $S\subseteq Z$ we have $S\cap
Z_n=\emptyset$, and therefore $\phi(S)\cap \phi(Z_n)=\emptyset$ for all $n\in \mathbf{N}$.
Thus
\[\psi(S)=\phi(S)\subseteq(\omega\sigma Y\backslash
Y)\backslash\bigcup_{n=1}^\infty \phi (Z_n)=\psi(Z).\]
   \item[Case 3] Suppose that $\Omega\in S$. Then $\Omega\in Z$. Let
\[S=(\omega\sigma X\backslash
X)\backslash\bigcup_{n=1}^\infty S_n \mbox { and }Z=(\omega\sigma X\backslash X)\backslash\bigcup_{n=1}^\infty Z_n\]
where $S_n,Z_n\in{\mathscr Z}(\omega\sigma X\backslash X)$ for all $n\in\mathbf{N}$. Therefore
\begin{eqnarray*}
S=S\cap Z&=&\Big((\omega\sigma X\backslash
X)\backslash\bigcup_{n=1}^\infty S_n\Big)\cap\Big((\omega\sigma X\backslash
X)\backslash\bigcup_{n=1}^\infty Z_n\Big)\\&=&(\omega\sigma X\backslash X)\backslash\bigcup_{n=1}^\infty (S_n\cup
Z_n).
\end{eqnarray*}
Thus, since $\phi(Z_n)\subseteq\phi(S_n\cup Z_n)$ for all $n\in \mathbf{N}$, we have
\[\psi(S)=(\omega\sigma Y\backslash
Y)\backslash\bigcup_{n=1}^\infty\phi(S_n\cup Z_n)\subseteq(\omega\sigma Y\backslash
Y)\backslash\bigcup_{n=1}^\infty \phi (Z_n)=\psi(Z).\]
\end{description}

\noindent This shows that $\psi$ is an order-homomorphism.
To show that $\psi$ is an order-isomorphism, we note that
\[\phi^{-1}:\big(\Theta_Y \big({\mathscr E}^{\,*}_{\,local-{\mathcal P}}(Y)\big),\subseteq\big)\rightarrow\big(\Theta_X\big({\mathscr E}^{\,*}_{\,local-{\mathcal P}}(X)\big),\subseteq\big)\]
is an order-isomorphism. Let
\[\gamma:\big({\mathscr Z}(\omega\sigma Y\backslash Y),\subseteq\big)\rightarrow\big({\mathscr Z}(\omega\sigma X\backslash X),\subseteq\big)\]
be the induced order-homomorphism which is defined as above.
Then it is straightforward to see that $\gamma=\psi ^{-1}$, that is,
$\psi$ is an order-isomorphism. This implies the existence of a homeomorphism
$f:\omega\sigma X\backslash X\rightarrow\omega\sigma Y\backslash Y$
such that $f(Z)=\psi (Z)$ for every $Z\in{\mathscr Z}(\omega\sigma
X\backslash X)$. Therefore, for any countable $G\subseteq I$,
since $X_G^*\in {\mathscr Z}(\omega\sigma X\backslash X)$, we have
\[f(X_G^*)=\psi (X_G^*)=\phi(X_G^*)\subseteq\sigma Y\backslash Y.\]
Thus $f(\sigma X\backslash X)\subseteq\sigma
Y\backslash Y$, which shows that $f(\Omega)=\Omega'$.  Therefore
$\sigma X\backslash X$ and $\sigma Y\backslash Y$ are homeomorphic.
\end{proof}

\begin{example}\label{POJH}
The Lindel\"{o}f property and the linearly Lindel\"{o}f property (besides $\sigma$-compactness itself) are examples of topological properties ${\mathcal P}$ satisfying the assumption of Theorem \ref{HJHGF}. To see this, let $X$ be a locally compact paracompact space. Assume a representation for $X$ as in Notation \ref{FTFE}. Recall that a Hausdorff space $X$ is said to be {\em linearly Lindel\"{o}f} \cite{G} provided that every linearly ordered (by set inclusion $\subseteq$) open cover of $X$ has a countable subcover, equivalently, if every uncountable subset of $X$ has a complete accumulation point in $X$. (Recall that a point $x\in X$ is called a {\em complete accumulation point} of a set $A\subseteq X$ if for every neighborhood $U$ of $x$ in $X$ we have $|U\cap A|=|A|$.) Note that if $X$ is non-$\sigma$-compact then (using the notation of Notation \ref{FTFE}) the set $I$ is uncountable. Let $A=\{x_i:i\in I\}$ where $x_i\in X_i$ for each $i\in I$. Then $A$ is an uncountable subset of $X$ without (even) accumulation points. Thus $X$ cannot be linearly Lindel\"{o}f as well. For the converse, note that if $X$ is not linearly Lindel\"{o}f, then, obviously, $X$ is not Lindel\"{o}f, and therefore, is non-$\sigma$-compact, as it is well-known that $\sigma$-compactness and the Lindel\"{o}f property coincide in the realm of locally compact paracompact spaces (this fact is evident from the representation given for $X$ in Notation \ref{FTFE}).
\end{example}

Theorem \ref{HJHGF} might leave the impression that $({\mathscr E}^{\,C}_{\,{\mathcal P}}(X), \leq)$ and $({\mathscr E}^{\,*}_{\,local-{\mathcal P}}(X),\leq)$ are order-isomorphic. The following is to settle this, showing that in most cases this indeed is not going to be the case.

\begin{theorem}\label{HFS}
Let $X$ be a locally compact paracompact (non-compact) space  and let  ${\mathcal P}$ be a closed hereditary topological property of compact spaces  which is preserved under finite sums of subspaces and coincides with $\sigma$-compactness in the realm of locally compact paracompact spaces. The  following are equivalent:
\begin{itemize}
\item[\rm(1)] $X$ is $\sigma$-compact.
\item[\rm(2)] $({\mathscr E}^{\,C}_{\,{\mathcal P}}(X),\leq)$ and $({\mathscr E}^{\,*}_{\,local-{\mathcal P}}(X),\leq)$ are order-isomorphic.
\end{itemize}
\end{theorem}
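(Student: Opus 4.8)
The plan is to transport both posets to families of zero-sets via the anti-order-isomorphism $\Theta_X$ of Notation~\ref{JLLHV} and then detect the difference between them from their order-structure alone. Assuming the notation of Notation~\ref{FTFE}, Lemma~\ref{GFFYH} gives $\lambda_{{\mathcal P}}X=\sigma X$, so Lemmas~\ref{HJGHG} and~\ref{HDGF} identify the images
\[A:=\Theta_X\big({\mathscr E}^{\,C}_{\,{\mathcal P}}(X)\big)=\big\{Z\in{\mathscr Z}(X^*):\beta X\backslash\sigma X\subseteq Z\big\}\backslash\{\emptyset\}\]
and
\[B:=\Theta_X\big({\mathscr E}^{\,*}_{\,local-{\mathcal P}}(X)\big)=\big\{Z\in{\mathscr Z}(\beta X):Z\subseteq\sigma X\backslash X\big\}\backslash\{\emptyset\}.\]
Since $\Theta_X$ reverses order on each factor, the two posets in (2) are order-isomorphic if and only if $(A,\subseteq)$ and $(B,\subseteq)$ are order-isomorphic, so the whole theorem reduces to comparing these two families.

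For (1)$\Rightarrow$(2) I would use that a $\sigma$-compact locally compact space satisfies $X^*\in{\mathscr Z}(\beta X)$ (see 1B of \cite{W}). When $X$ is $\sigma$-compact we have $\sigma X=\beta X$, so the conditions collapse to $A={\mathscr Z}(X^*)\backslash\{\emptyset\}$ and $B=\{Z\in{\mathscr Z}(\beta X):Z\subseteq X^*\}\backslash\{\emptyset\}$. Because $X^*$ is a zero-set of the normal space $\beta X$ and is hence $C^*$-embedded, every zero-set of $X^*$ is the trace on $X^*$ of a zero-set of $\beta X$, while conversely any zero-set of $\beta X$ contained in $X^*$ already is a zero-set of $X^*$. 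Thus $A=B$ as families of subsets of $X^*$, and the identity map is the required order-isomorphism.

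For (2)$\Rightarrow$(1) I would argue by contraposition, exhibiting an order-theoretic invariant that separates $A$ from $B$ when $X$ is not $\sigma$-compact. The family $A$ always has a largest element, namely $X^*$ itself (it lies in ${\mathscr Z}(X^*)$, contains $\beta X\backslash\sigma X$, and is non-empty as $X$ is non-compact); this is exactly the image of the one-point compactification $\omega X$, the smallest member of ${\mathscr E}^{\,C}_{\,{\mathcal P}}(X)$. By contrast, I claim $B$ has no largest element once $X$ fails to be $\sigma$-compact. Indeed, each non-empty $X_J^*$ with $J\subseteq I$ countable lies in $B$, and $\bigcup\{X_J^*:J\subseteq I\text{ countable}\}=\sigma X\backslash X$; so a largest element $Z_0$ of $B$ would have to satisfy $\sigma X\backslash X\subseteq Z_0\subseteq\sigma X\backslash X$, forcing $Z_0=\sigma X\backslash X\in{\mathscr Z}(\beta X)$. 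As an order-isomorphism preserves largest elements, the presence of a top in $A$ together with its absence in $B$ precludes an order-isomorphism, which completes the contrapositive.

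The crux is therefore the final implication: $\sigma X\backslash X\in{\mathscr Z}(\beta X)$ forces $X$ to be $\sigma$-compact. Here I would invoke Lemma~\ref{JGRD}, by which every non-empty zero-set of $X^*$ meets $\sigma X$ and, being disjoint from $X$, meets $\sigma X\backslash X$; since in the compact Tychonoff space $X^*$ every non-empty open set contains a non-empty zero-set, this makes $\sigma X\backslash X$ dense in $X^*$. A subset that is simultaneously dense in $X^*$ and closed in $\beta X$ (as a zero-set must be) can only be $X^*$, so $\sigma X=\beta X$; the clopen-cover compactness argument of Lemma~\ref{GFFYH} then shows $X$ is $\sigma$-compact, the desired contradiction. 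I expect this density-to-equality step to be the main obstacle, since it is the one point where the actual position of $\sigma X$ inside $\beta X$, rather than a purely formal manipulation of zero-sets, is required.
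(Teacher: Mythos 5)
Your argument is correct, and your (1)$\Rightarrow$(2) is essentially the paper's: when $X$ is $\sigma$-compact, $\sigma X=\beta X$ and $X^*\in{\mathscr Z}(\beta X)$ make the two images under $\Theta_X$ literally coincide with ${\mathscr Z}(X^*)\backslash\{\emptyset\}$. For (2)$\Rightarrow$(1), however, you use a genuinely different order-theoretic invariant. The paper uses directedness: $\Theta_X({\mathscr E}^{\,C}_{\,{\mathcal P}}(X))$ is closed under finite intersections (non-empty because each member contains $\beta X\backslash\sigma X\neq\emptyset$), so any two elements of ${\mathscr E}^{\,C}_{\,{\mathcal P}}(X)$ have a common upper bound, while $X_i^*$ and $X_j^*$ for $i\neq j$ are disjoint members of $\Theta_X({\mathscr E}^{\,*}_{\,local-{\mathcal P}}(X))$, so the corresponding extensions have none; this needs nothing beyond Lemmas \ref{HJGHG} and \ref{HDGF}. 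Your invariant is the presence of a greatest element ($X^*$ in $A$, none in $B$), which is equally legitimate but costs you the extra step that a top of $B$ would have to be $\sigma X\backslash X$ and that $\sigma X\backslash X\in{\mathscr Z}(\beta X)$ forces $\sigma$-compactness. That step is sound as you give it: every non-empty open subset of the compact space $X^*$ contains a non-empty zero-set of $X^*$, which by Lemma \ref{JGRD} meets $\sigma X$ and hence $\sigma X\backslash X$, so $\sigma X\backslash X$ is dense in $X^*$; being also closed in $\beta X$ it equals $X^*$, so $\sigma X=\beta X$, and compactness then yields $X=X_{J_1}\cup\cdots\cup X_{J_n}$ with each $J_k$ countable. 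The trade-off: the paper's route is shorter and avoids Lemma \ref{JGRD} altogether, while yours isolates a more tangible structural difference, namely that ${\mathscr E}^{\,C}_{\,{\mathcal P}}(X)$ always has a least element (the one-point compactification) whereas ${\mathscr E}^{\,*}_{\,local-{\mathcal P}}(X)$ has one exactly when $X$ is $\sigma$-compact.
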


\begin{proof}
Since $X$ is locally compact, the set $X^*$ is closed in (the normal space) $\beta X$ and thus, using the Tietze-Urysohn Theorem, every  zero-set of  $X^*$ is extendible to a zero-set of $\beta X$. Now if $X$ is $\sigma$-compact (since $X$ is also locally compact) we have $X^*\in{\mathscr Z}(\beta X)$ and therefore every zero-set of $X^*$ is a zero-set of $\beta X$. Note that $\lambda_{{\mathcal P}} X=\sigma X=\beta X$. Thus using Lemmas
\ref{HJGHG} and \ref{HDGF} we have
\[\Theta_X\big({\mathscr E}^{\,C}_{\,{\mathcal P}}(X)\big)={\mathscr Z}(X^*)\backslash\{\emptyset\}=\Theta_X\big({\mathscr E}^{\,*}_{\,local-{\mathcal P}}(X)\big)\]
from which it follows that
\[{\mathscr E}^{\,C}_{\,{\mathcal P}}(X)={\mathscr E}^{\,*}_{\,local-{\mathcal P}}(X).\]

If $X$ is non-$\sigma$-compact, then any two elements of ${\mathscr E}^{\,C}_{\,{\mathcal P}}(X)$ has a common upper bound while this is not the case for ${\mathscr E}^{\,*}_{\,local-{\mathcal P}}(X)$. To see this, note that by Lemma \ref{HJGHG} the set $\Theta_X({\mathscr E}^{\,C}_{\,{\mathcal P}}(X))$ is closed under finite intersections (note that the finite intersections are non-empty, as they contain $\beta X\backslash\sigma X$ and the latter is non-empty, as $X$ is non-$\sigma$-compact) while there exist (at least) two elements in $\Theta_X({\mathscr E}^{\,*}_{\,local-{\mathcal P}}(X))$ with empty intersection; simply consider $X_{i}^*$ and $X_{j}^*$ for some distinct $i,j\in I$ (we are assuming the representation for $X$ given in Notation \ref{FTFE}).
\end{proof}

We conclude this article with the following.

\begin{project}\label{HYF}
Let $X$  be a (locally compact paracompact) space and let ${\mathcal P}$ be a (closed hereditary) topological property (of compact spaces which is preserved under finite sums of subspaces and coincides with $\sigma$-compactness in the realm of locally compact paracompact spaces). Explore the relationship between the order structures of $({\mathscr E}^{\,C}_{\,{\mathcal P}}(X), \leq)$ and $({\mathscr E}^{\,*}_{\,local-{\mathcal P}}(X), \leq)$.
\end{project}

\end{document}